\numberwithin{equation}{section}
\newtheorem{theorem}{Theorem}[section]
\newtheorem{lemma}[theorem]{Lemma}
\newtheorem{corollary}[theorem]{Corollary}
\newtheorem{proposition}[theorem]{Proposition}
\theoremstyle{definition}
\newtheorem{definition}[theorem]{Definition}
\newtheorem{example}[theorem]{Example}
\newtheorem{remark}[theorem]{Remark}
\DeclareMathOperator{\re}{Re}
\DeclareMathOperator{\im}{Im}
\DeclareMathOperator{\sign}{sign}
\DeclareMathOperator{\ind}{\mathds{1}}
\newcommand{\cm}{\mathscr{CM}}
\newcommand{\am}{\mathscr{AM}}
\newcommand{\amcm}{\am\text{-}\cm}
\newcommand{\laplace}{\mathscr{L}}
\newcommand{\C}{\mathds{C}}
\newcommand{\R}{\mathds{R}}
\newcommand{\Z}{\mathds{Z}}
\newcommand{\ph}{\varphi}
\newcommand{\eps}{\varepsilon}
\renewcommand{\le}{\leqslant}
\renewcommand{\ge}{\geqslant}
\newcommand{\formula}[2][nolabel]%
{%
 \ifthenelse{\equal{#1}{nolabel}}%
 {\begin{align*} \begin{aligned} #2 \end{aligned} \end{align*}}%
 {%
  \ifthenelse{\equal{#1}{}}%
  {\begin{align} #2 \end{align}}%
  {\begin{align} \label{#1} \begin{aligned} #2 \end{aligned} \end{align}}%
 }%
}
\begin{document}

\title[A new class of bell-shaped functions]{A new class of bell-shaped functions}
\author{Mateusz Kwaśnicki}
\thanks{Work supported by the Polish National Science Centre (NCN) grant no.\@ 2015/19/B/ST1/01457}
\address{Mateusz Kwaśnicki \\ Faculty of Pure and Applied Mathematics \\ Wrocław University of Science and Technology \\ ul. Wybrzeże Wyspiańskiego 27 \\ 50-370 Wrocław, Poland}
\email{mateusz.kwasnicki@pwr.edu.pl}
\dedicatory{In memory of Augustyn Kałuża}
\keywords{Bell-shape, Pólya frequency function, completely monotone function, absolutely monotone function, Stieltjes function, generalised gamma convolution}
\subjclass[2010]{Primary: 26A51, 60E07. Secondary: 60E10, 60G51}

\begin{abstract}
We provide a large class of functions $f$ that are bell-shaped: the $n$-th derivative of $f$ changes its sign exactly $n$ times. This class is described by means of Stieltjes-type representation of the logarithm of the Fourier transform of $f$, and it contains all previously known examples of bell-shaped functions, as well as all extended generalised gamma convolutions, including all density functions of stable distributions. The proof involves representation of $f$ as the convolution of a Pólya frequency function and a function which is absolutely monotone on $(-\infty, 0)$ and completely monotone on $(0, \infty)$. In the final part we disprove three plausible generalisations of our result.
\end{abstract}

\maketitle

%
%

\section{Introduction and main results}

By Rolle's theorem, for every $n = 0, 1, 2, \ldots$ the $n$-th derivative $f^{(n)}$ of any smooth function $f$ which converges to zero at $\pm \infty$ changes its sign at least $n$ times. Such $f$ is said to be bell-shaped if $f^{(n)}$ changes its sign exactly $n$ times. Note that in this case $f^{(n)}$ necessarily converges to zero at $\pm\infty$ for every $n = 1, 2, \ldots$\, The study of bell-shaped functions originated in the theory of games, see Section~6.11.C in~\cite{bib:k68}. The main result of~\cite{bib:g84} asserted that all stable distributions have bell-shaped density functions. However, the argument given there contained an error, which led to an open problem that remained unanswered for over 30 years. In the present paper we provide a correct proof of a much more general result.

It is elementary to prove that $e^{-x^2}$, $(1 + x^2)^{-p}$ and $x^{-p} e^{-1/x} \ind_{(0, \infty)}(x)$ are bell-shaped for $p > 0$. I.~I.~Hirschman proved in~\cite{bib:h50} that there are no compactly supported bell-shaped functions, thus resolving a conjecture of I.~J.~Schoenberg. A classical result due to the latter asserts that P\'olya frequency functions (which are discussed in Section~\ref{sec:polya}) are bell-shaped; see~\cite{bib:hw55}. T.~Simon proved in~\cite{bib:s15} that positive stable distributions have bell-shaped density functions. Density functions of general stable distributions are known to be bell-shaped when their index of stability is $2$ or $1/n$ for some $n = 1, 2, \ldots$: this follows from~\cite{bib:g84} after correcting an error in the proof, see~\cite{bib:s15} for a detailed discussion. W.~Jedidi and T.~Simon showed in~\cite{bib:js15} that hitting times of (generalised) diffusions are bell-shaped. The main purpose of this article is to provide a new class of bell-shaped functions which, to the best knowledge of the author, contains all known examples of bell-shaped functions.

We say that a locally integrable function (or, more generally, a locally finite measure) $f$ is weakly bell-shaped if the convolution of $f$ with the Gauss--Weierstrass kernel $(4 \pi t)^{-1/2} e^{-x^2 / (4 t)}$ is bell-shaped for every $t > 0$.

\begin{theorem}
\label{thm:bell}
Suppose that $f$ is a locally integrable function which converges to zero at $\pm\infty$, and which is decreasing near $\infty$ and increasing near $-\infty$. Suppose furthermore that for $\xi \in \R \setminus \{0\}$ the Fourier transform of $f$ satisfies
\formula[bell]{
 \laplace f(i \xi) &:= \int_{-\infty}^\infty e^{-i \xi x} f(x) dx \\
 & \phantom{:}= \exp\biggl(-a \xi^2 - i b \xi + c + \int_{-\infty}^\infty \biggl( \frac{1}{i \xi + s} - \biggl(\frac{1}{s} - \frac{i \xi}{s^2} \biggr) \ind_{\R \setminus (-1, 1)}(s) \biggr) \ph(s) ds \biggr)
}
(with the former integral understood as an improper integral); here $a \ge 0$, $b, c \in \R$ and $\ph : \R \to \R$ is a function with the following properties:
\begin{enumerate}[label=\textnormal{(\alph*)}]
\item\label{thm:bell:a} for every $k \in \Z$ the function $\ph(s) - k$ changes its sign at most once, and for $k = 0$ this change takes place at $s = 0$: we have $\ph(s) \ge 0$ for $s > 0$ and $\ph(s) \le 0$ for $s < 0$;
\item\label{thm:bell:c} we have
\formula{
 \biggl( \int_{-\infty}^{-1} + \int_1^\infty \biggr) \frac{|\ph(s)|}{|s|^3} \, ds < \infty ;
}
\item\label{thm:bell:d} we have
\formula{
 \int_{-1}^1 \re \laplace f(i \xi) d\xi & < \infty &\qquad& \text{and} &\qquad \lim_{\xi \to 0} \im \laplace f(i \xi) & = 0 .
}
\end{enumerate}
Then $f$ is weakly bell-shaped. If in addition $f$ is smooth, then $f$ is bell-shaped. Conversely, any parameters $a$, $b$, $c$ and $\ph$ with the properties listed above \textup{(}where in~\ref{thm:bell:d} we assume that $\laplace f(i \xi)$ is defined by~\eqref{bell}\textup{)} correspond to some weakly bell-shaped function $f$ (possibly with an extra atom at $b$).
\end{theorem}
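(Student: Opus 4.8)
The plan is to follow the route announced in the abstract: to write $f$ as a convolution $g \ast h$ of a Pólya frequency function $g$ with a function $h$ that is $\amcm$ (absolutely monotone on $(-\infty,0)$, completely monotone on $(0,\infty)$, and possibly carrying an atom), and then to feed this into the results on Pólya frequency functions and on $\amcm$ functions proved earlier in the paper. Two reductions come first: the factor $e^{-ib\xi}$ in~\eqref{bell} merely translates $f$, so we may take $b = 0$; and $e^{-a\xi^2}$ is the Fourier transform of the Gauss--Weierstrass kernel $(4\pi a)^{-1/2} e^{-x^2/(4a)}$, so convolving $f$ with $(4\pi t)^{-1/2} e^{-x^2/(4t)}$ simply turns $a$ into $a+t$. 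It is therefore enough to prove: \emph{for every admissible choice of $a > 0$, $c$ and $\ph$, the corresponding function $g \ast h$ is bell-shaped}. Weak bell-shapedness of $f$ for any $a \ge 0$ then follows immediately, since $f \ast (4\pi t)^{-1/2} e^{-x^2/(4t)}$ has parameter $a + t > 0$.

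The heart of the matter is a decomposition $\ph = \ph_1 + \ph_2$ in which $\ph_1$ takes values in $[-1,1]$ and has the sign of $s$, while $\ph_2$ is \emph{nondecreasing} on $\R \setminus \{0\}$, also with the sign of $s$. Condition~\ref{thm:bell:a} — which says that $\ph$ is nonnegative on $(0,\infty)$, nonpositive on $(-\infty,0)$, and that the sets $\{\ph > k\}$ for integer $k \ge 1$ and $\{\ph < k\}$ for integer $k \le -1$ are half-lines — is tailored precisely so that $\ph$ varies by strictly less than $1$ on each half-line, and this is exactly what makes such a split exist (e.g.\ one may take $\ph_2$ to be the running maximum of $(|\ph|-1)_+$ with the sign of $s$). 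Substituting the split into~\eqref{bell} and keeping the term $-a\xi^2$ separate, one reads off a factorisation $\laplace f(i\xi) = \laplace g(i\xi)\,\laplace h(i\xi)$, where $\laplace h$ is the exponential built from $\ph_1$ alone and $\laplace g$ the one built from $\ph_2$, $-a\xi^2$, and a compensating linear term. The two classical representation theorems now finish the identification: a nondecreasing $\ph_2$ with the stated sign, together with the tail bound~\ref{thm:bell:c}, corresponds via Schoenberg's characterisation to a genuine Pólya frequency function $g$ — carrying a Gaussian factor, hence smooth, because $a > 0$ — and a $[-1,1]$-valued $\ph_1$ with the stated sign corresponds via the Stieltjes-type representation of their Fourier transforms to an $\amcm$ function (or measure) $h$. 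Conditions~\ref{thm:bell:c} and~\ref{thm:bell:d} are what ensure that all the improper integrals involved converge and that $f = g \ast h$ is an honest locally integrable function, rather than a more singular distribution, except for at most one atom inherited from $h$. This bookkeeping — matching the analytic hypotheses~\ref{thm:bell:a}--\ref{thm:bell:d} against the two representations and checking that the product of the transforms is exactly~\eqref{bell} — is the step I expect to be the real obstacle; the remaining hypotheses (that $f$ is decreasing near $+\infty$ and increasing near $-\infty$) are used only to identify the given $f$ with the function $g \ast h$.

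With the factorisation $f = g \ast h$ in hand, the bell-shapedness of $f$ (for $a > 0$, where $g$ is smooth) is precisely the main earlier result on convolutions of a Pólya frequency function with an $\amcm$ function. The general weak statement follows by writing $f \ast (4\pi t)^{-1/2} e^{-x^2/(4t)} = \bigl(g \ast (4\pi t)^{-1/2} e^{-x^2/(4t)}\bigr) \ast h$ and noting that the first factor is again a Pólya frequency function. If moreover $f$ itself is smooth, weak bell-shapedness upgrades to bell-shapedness by a limiting argument: $f \ast (4\pi t)^{-1/2} e^{-x^2/(4t)} \to f$ in $C^\infty$ uniformly on compacta as $t \to 0^+$; since sign changes of $f^{(n)}$ persist under uniform approximation, $f^{(n)}$ has at most $n$ of them; and the matching lower bound $n$ comes from Rolle's theorem, the required decay of $f^{(k)}$ at $\pm\infty$ for $k < n$ being itself a consequence of the bound just obtained.

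For the converse one runs the construction forward: given $a \ge 0$, $b, c \in \R$ and $\ph$ with properties~\ref{thm:bell:a}--\ref{thm:bell:d}, the split $\ph = \ph_1 + \ph_2$ yields a Pólya frequency function $g$ and an $\amcm$ function-or-measure $h$ as above, and $f := g \ast h$, translated by $b$, is a locally finite measure — locally integrable away from at most one atom, located at $b$ — whose Fourier transform equals~\eqref{bell} by construction. Its weak bell-shapedness follows as before, and such an $f$ is automatically decreasing near $+\infty$ and increasing near $-\infty$; hence the hypotheses of the direct part hold for it, and the two halves of the theorem are consistent.
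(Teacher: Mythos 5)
Your proposal follows the same overall route as the paper: factorise $f = g * h$ with $g$ a Pólya frequency function (variation diminishing) and $h$ an $\amcm$ function (proved weakly bell-shaped in Theorem~\ref{thm:amcm}), with the factorisation encoded by a split $\ph = \ph_1 + \ph_2$ of the representing function. That is exactly Lemma~\ref{lem:bell} of the paper, combined with Theorem~\ref{thm:amcm} and Theorem~\ref{thm:polya}. The reduction to $a > 0$ is harmless but unnecessary, since once $h$ is known to be weakly bell-shaped and $g$ variation diminishing, $g * h$ is weakly bell-shaped irrespective of whether $g$ carries a Gaussian factor.

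There is, however, a genuine flaw in the one concrete suggestion you make for the split. For $g$ to be a Pólya frequency function, $\ph_2$ must be an \emph{integer-valued} nondecreasing step function — this is the content of Proposition~\ref{prop:polya:laplace}(b), corresponding to the discrete zeros $z_n$ in the Hadamard product~\eqref{eq:polya}. The running maximum of $(|\ph|-1)_+$ signed by $s$ is not integer-valued in general (take any continuous increasing $\ph$), so the resulting $\ph_2$ would represent at best an extended generalised gamma convolution, which is not known to be variation diminishing — indeed Schoenberg's Theorem~\ref{thm:polya} says that Pólya frequency functions are the \emph{only} variation diminishing kernels, so the argument would break. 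The correct construction, the one the paper uses, defines $s_k$ as the unique level-crossing point of $\ph - k$ given by condition~\ref{thm:bell:a}, and sets $\ph_2$ equal to the integer staircase with jumps at the $s_k$; the level-crossing condition then guarantees $\ph_1 = \ph - \ph_2 \in [0,1]$ on $(0,\infty)$ and $\in[-1,0]$ on $(-\infty,0)$. One further point you do not address: condition~\ref{thm:bell:d} is needed to prove that $s_1 > 0$ and $s_{-1} < 0$, so that $\ph_2$ vanishes near $0$ and $|\ph_2(s)|/|s|^3$ is integrable there, which in turn is what makes $g$ a legitimate Pólya frequency function. You flagged the bookkeeping as the likely obstacle; this integer-valued requirement on $\ph_2$ and the need for $s_{\pm 1} \ne 0$ are exactly where that obstacle sits, and your proposed $\ph_2$ would not get past it.
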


\begin{figure}
\centering
\includegraphics[width=0.8\textwidth]{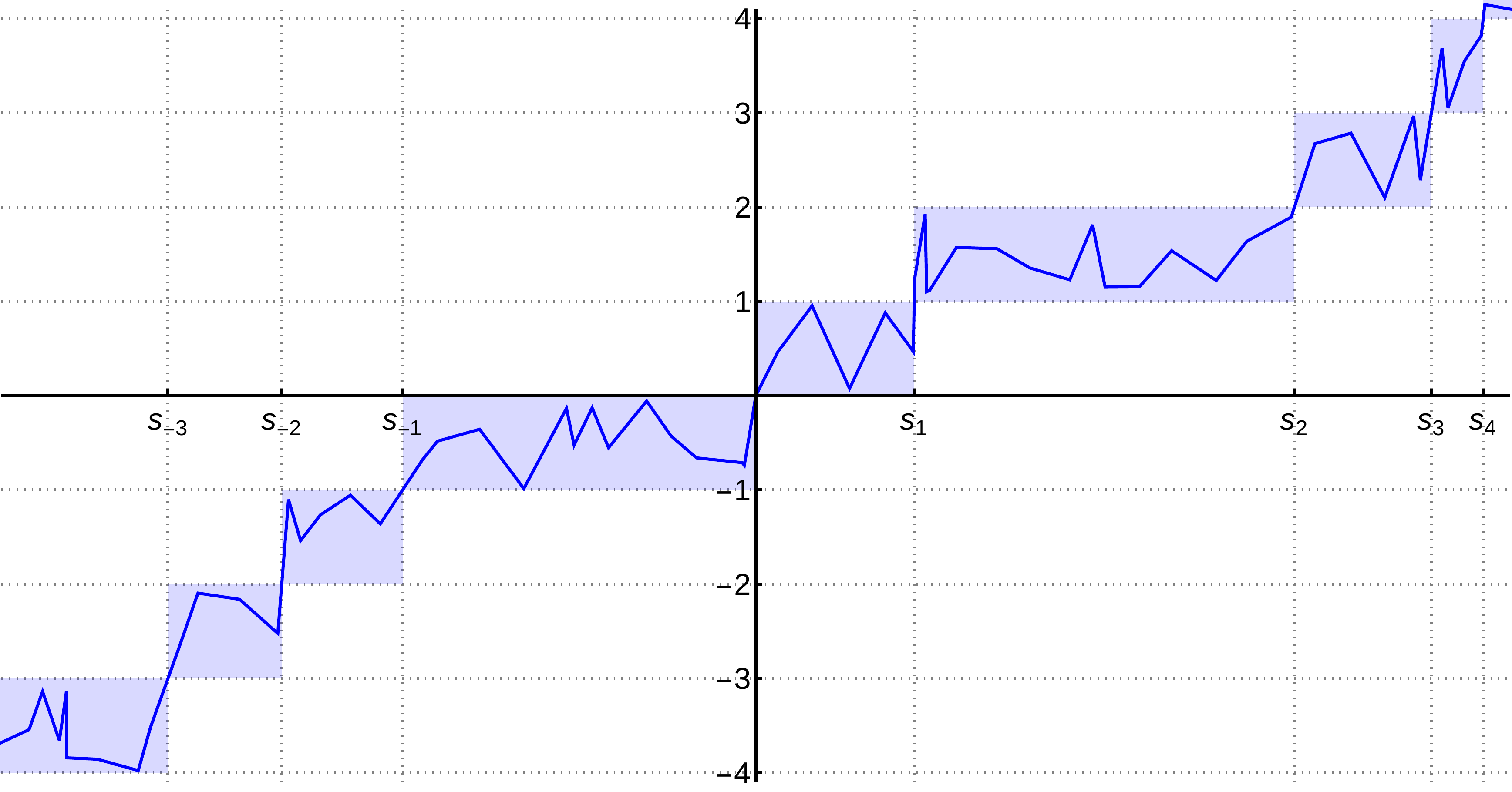}
\caption{Illustration for the level crossing condition~\ref{thm:bell:a} in Theorem~\ref{thm:bell}. The graph of the function $\ph$ is to be contained in the union of rectangles $[s_k, s_{k+1}] \times [k, k+1]$, $k \in \Z$.}
\label{fig:phi}
\end{figure}

The integrability condition~\ref{thm:bell:c} asserts convergence of the integral in the expression for $\laplace f$. Condition~\ref{thm:bell:d} is also rather natural: as it is explained in Section~\ref{sec:main}, it is related to convergence of the corresponding function $f$ to zero at $\pm\infty$. However, the \emph{level crossing condition}~\ref{thm:bell:a} may seem a bit artificial: we require that there is a non-decreasing sequence of points $s_k \in [-\infty, \infty]$, where $k \in \Z$, such that $\ph(s) \in [k, k + 1]$ for $s \in (s_k, s_{k+1})$ (see Figure~\ref{fig:phi}). The function $\ph$ is therefore uniformly close to a non-decreasing function, but it may have downward jumps of size up to $1$, as long as these jumps do not cross any integer. Interestingly, this rather unnatural assumption on $\ph$ cannot be easily relaxed, as shown by the examples in Section~\ref{sec:ex}.

The class of bell-shaped functions $f$ described by Theorem~\ref{thm:bell} includes both integrable and non-integrable functions. If we assume in addition that $f$ is integrable, then, up to multiplication by a constant, $f$ is the density function of a certain infinitely divisible distribution. In this case $a$ is its Gaussian coefficient, and the L\'evy measure has a very simple description in terms of the function $\ph$ defined in Theorem~\ref{thm:bell}: it is of the form $\nu(x) dx$, where for $x > 0$, $\nu(x)$ and $\nu(-x)$ are Laplace transforms of $\ph(s) \ind_{(0, \infty)}(s)$ and $-\ph(-s) \ind_{(0, \infty)}(s)$, respectively. Since $\ph(s)$ and $-\ph(-s)$ are non-negative for $s > 0$, $\nu(x)$ and $\nu(-x)$ are completely monotone functions on $(0, \infty)$, that is, $f$ is the density function of the one-dimensional distribution of a Lévy process with completely monotone jumps, studied in~\cite{bib:k19,bib:r83}. Note, however, that not all completely monotone densities $\nu(x)$ and $\nu(-x)$ can be obtained in this way.

The expression for the Fourier transform of $f$ in Theorem~\ref{thm:bell} can be written in a L\'evy--Khintchine fashion in the general case, also when $f$ is not integrable. This is shown in Corollary~\ref{cor:bell} below, which covers a slightly narrower class of functions; in the general statement, one would assume that $\nu(x)$ and $\nu(-x)$ are Laplace transforms of $\ph(s) \ind_{(0, \infty)}(s)$ and $-\ph(-s) \ind_{(0, \infty)}(s)$, respectively, where $\ph$ satisfies the conditions listed in Theorem~\ref{thm:bell}.

With $\ph$ and $\nu$ defined as in the previous paragraph, the function $\ph$ is non-decreasing if and only if $x \nu(x)$ and $x \nu(-x)$ are completely monotone functions of $x > 0$. This condition characterises a class of functions which is sometimes called \emph{extended generalised gamma convolutions}; we refer to Chapter~7 in~\cite{bib:b92} for a detailed discussion. Theorem~\ref{thm:bell} asserts in particular that all such functions are weakly bell-shaped.

\begin{corollary}
\label{cor:bell}
Suppose that $f$ is a locally integrable function which converges to zero at $\pm\infty$, which is decreasing near $\infty$ and increasing near $-\infty$. Suppose furthermore that for $\xi \in \R \setminus \{0\}$ the Fourier transform of $f$ satisfies
\formula{
 \laplace f(i \xi) & = \exp\biggl(-a \xi^2 - i b \xi + c + \int_{-\infty}^\infty (e^{-i \xi x} - (1 + |x| - i \xi x) e^{-|x|}) \nu(x) dx \biggr) ;
}
here $a \ge 0$, $b, c \in \R$ and $\nu : \R \to \R$ is a function with the following properties:
\begin{enumerate}[label=\textnormal{(\alph*)}]
\item\label{cor:bell:a} $x \nu(x)$ and $x \nu(-x)$ are completely monotone functions of $x > 0$\textup{;}
\item\label{cor:bell:b} we have
\formula{
 \int_{-1}^1 x^2 \nu(x) dx < \infty .
}
\end{enumerate}
Then $f$ is weakly bell-shaped. If in addition $f$ is smooth, then $f$ is bell-shaped. Conversely, any parameters $a$, $b$, $c$ and $\nu$ with the properties listed above correspond to some weakly bell-shaped function $f$ (possibly with an extra atom at $b$).

In particular, if $f$ is the density function of an infinitely divisible distribution with Gaussian coefficient $a \ge 0$, drift $b \in \R$ and L\'evy measure $\nu(x) dx$, and if $x \nu(x)$ and $x \nu(-x)$ are completely monotone on $(0, \infty)$, then $f$ is weakly bell-shaped.
\end{corollary}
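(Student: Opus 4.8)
The plan is to deduce the corollary from Theorem~\ref{thm:bell} by the change of parameters $\nu \leftrightarrow \ph$ indicated in the paragraphs preceding its statement. Given $\nu$ with property~\ref{cor:bell:a}, Bernstein's theorem provides unique non-negative (necessarily locally finite) measures $M$ and $\Lambda$ on $[0,\infty)$ with $x \nu(x) = \int_{[0,\infty)} e^{-xt}\,M(dt)$ and $x \nu(-x) = \int_{[0,\infty)} e^{-xt}\,\Lambda(dt)$ for $x > 0$; writing $x^{-1} = \int_0^\infty e^{-xu}\,du$ and applying Tonelli's theorem we obtain $\nu(x) = \int_0^\infty e^{-xs} M([0,s])\,ds$ and $\nu(-x) = \int_0^\infty e^{-xs} \Lambda([0,s])\,ds$ for $x > 0$. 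We therefore put $\ph(s) = M([0,s])$ for $s > 0$, $\ph(0) = 0$, and $\ph(s) = -\Lambda([0,-s])$ for $s < 0$. This $\ph$ is finite-valued, non-decreasing on $\R$, non-negative on $(0,\infty)$ and non-positive on $(-\infty,0)$, and $\nu(x)$, $\nu(-x)$ are precisely the Laplace transforms of $\ph(s) \ind_{(0,\infty)}(s)$ and $-\ph(-s) \ind_{(0,\infty)}(s)$. Since a non-decreasing function crosses each level at most once and crosses level $0$ at the origin, condition~\ref{thm:bell:a} holds automatically.

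For condition~\ref{thm:bell:c}, Tonelli's theorem gives $\int_0^1 x^2 \nu(x)\,dx = \int_0^\infty \ph(s) \bigl(\int_0^1 x^2 e^{-xs}\,dx\bigr)\,ds$, and since $\int_0^1 x^2 e^{-xs}\,dx$ lies between constant multiples of $(1+s)^{-3}$ for $s \ge 0$ while $\ph$ is bounded on $[0,1]$, this is finite if and only if $\int_1^\infty s^{-3} \ph(s)\,ds < \infty$; together with the symmetric computation on $(-\infty,0)$ this shows that~\ref{cor:bell:b} is equivalent to~\ref{thm:bell:c}. Condition~\ref{thm:bell:d} is a property of $\laplace f$ alone, and hence does not depend on which of the two (equivalent) representations of $\laplace f$ one writes down; for the function $f$ of the corollary it is provided by the standing hypotheses --- as discussed in Section~\ref{sec:main}, it is related to the convergence of $f$ to $0$ at $\pm\infty$.

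It remains to match the two exponents, with $a$ unchanged and $b$, $c$ possibly altered. Inserting $\nu(x) = \int_0^\infty e^{-xs} \ph(s)\,ds$ (for $x > 0$; similarly for $x < 0$) and applying Fubini's theorem --- which is justified after truncating the $x$-integral and passing to the limit with the help of~\ref{thm:bell:c} and~\ref{thm:bell:d} --- the $x > 0$ contribution $\int_0^\infty (e^{-i\xi x} - (1 + x - i\xi x) e^{-x}) \nu(x)\,dx$ becomes $\int_0^\infty \ph(s) (1 - i\xi)^2 \bigl((i\xi + s)(1+s)^2\bigr)^{-1}\,ds$, which differs from the $s > 0$ part of the integral in~\eqref{bell}, namely $\int_0^\infty \ph(s) \bigl((i\xi + s)^{-1} - (s^{-1} - i\xi s^{-2}) \ind_{[1,\infty)}(s)\bigr)\,ds$, by a term affine in $\xi$ with real coefficients --- indeed the integrand of the difference is affine in $i\xi$ and $O((1+s)^{-3})$ as $s \to \infty$, hence integrable against $\ph(s)\,ds$ by~\ref{thm:bell:c}. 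Treating the $x < 0$ contribution symmetrically and absorbing the resulting affine term into $b$ and $c$, $\laplace f$ takes the form~\eqref{bell} with a function $\ph$ satisfying~\ref{thm:bell:a}--\ref{thm:bell:d}, so Theorem~\ref{thm:bell} yields that $f$ is weakly bell-shaped, and bell-shaped if smooth. For the converse, the same construction applied to given parameters $a$, $b$, $c$ and $\nu$ with~\ref{cor:bell:a}--\ref{cor:bell:b} produces $\ph$ with~\ref{thm:bell:a}--\ref{thm:bell:d}, and the converse part of Theorem~\ref{thm:bell} provides the required $f$ (possibly with an atom at $b$).

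Finally, if $f$ is the density of an infinitely divisible law with Gaussian coefficient $a$, drift $b$ and L\'evy measure $\nu(x)\,dx$, the L\'evy--Khintchine formula expresses $\laplace f(i\xi)$ with the truncation term $i\xi x \ind_{(-1,1)}(x)$; subtracting the exponent appearing in the corollary leaves $\int_{-\infty}^\infty ((1+|x|)e^{-|x|} - 1) \nu(x)\,dx + i\xi \int_{-\infty}^\infty x (\ind_{(-1,1)}(x) - e^{-|x|}) \nu(x)\,dx$, a finite real number plus $i\xi$ times a finite real number (the integrands are $O(x^2)$ as $x \to 0$ and are, respectively, $O(\nu(x))$ and $O(e^{-|x|})$ for $|x| \ge 1$, while $\int_{-1}^1 x^2 \nu(x)\,dx < \infty$ and $\int_{\R \setminus (-1,1)} \nu(x)\,dx < \infty$ for every L\'evy measure); absorbing this into $c$ and $b$, and noting that~\ref{cor:bell:b} then holds automatically, the claim reduces to the part already proved. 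I expect the one genuinely delicate point to be the matching of the two integral representations: the individual pieces there are only conditionally convergent unless one truncates carefully, and it is precisely at that step that conditions~\ref{thm:bell:c} and~\ref{thm:bell:d} are used --- all the real substance of the result is already contained in Theorem~\ref{thm:bell}.
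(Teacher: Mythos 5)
Your overall strategy coincides with the paper's: construct $\ph$ from the Bernstein measures of $x \nu(x)$ and $x \nu(-x)$, check the level-crossing and integrability conditions (your computation with $\int_0^1 x^2 e^{-xs} dx \asymp (1+s)^{-3}$ is exactly the paper's "translates easily into"), and match the two exponents up to a term affine in $i\xi$ that is absorbed into $b$ and $c$; the algebra, including the identity $\tfrac{1}{i\xi+s} - \tfrac{2+s-i\xi}{(1+s)^2} = \tfrac{(1-i\xi)^2}{(i\xi+s)(1+s)^2}$, is correct, and your explicit handling of the Lévy--Khintchine "in particular" clause is a point the paper leaves implicit. The only methodological difference in the exponent-matching step is that you justify Fubini by truncating the $x$-integral, whereas the paper works with $\re z > 0$ (resp.\ $\re z < 0$) and passes to the boundary; both are viable.

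The genuine gap is your treatment of condition~\ref{thm:bell:d}. You dismiss it as "provided by the standing hypotheses," but it is an independent hypothesis of Theorem~\ref{thm:bell}: the function $f(x) = x^{-1/2} \ind_{(0,\infty)}(x)$ is locally integrable, tends to zero at $\pm\infty$ and is monotone near $\pm\infty$, yet $\laplace f(i\xi) = \sqrt{\pi/(2\xi)}\,(1-i)$ for $\xi > 0$, so $\im \laplace f(i\xi)$ does not converge to $0$. Moreover, in the converse half of the corollary there is no $f$ given in advance, so the condition must be verified directly for the function defined by the right-hand side of~\eqref{bell}, and you offer no argument there at all. The paper closes precisely this point by observing that the constructed $\ph$ satisfies $\ph(0) = 0$ and is continuous at $0$; since then $|\ph|$ is smaller than any $\eps > 0$ on a neighbourhood of the origin, the contribution of that neighbourhood to $\log|\laplace f(i\xi)|$ is at most $\eps \log(1/|\xi|) + C$, giving $|\laplace f(i\xi)| \le C_\eps |\xi|^{-\eps}$ and hence condition~\eqref{eq:bell:c}. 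You need to supply this (or an equivalent) argument. Note also that with your normalisation $\ph(s) = M([0,s])$ an atom of $M$ at the origin would make $\ph(0^+) = M(\{0\}) > 0$, so the continuity of $\ph$ at $0$ — the very property the argument rests on — requires a word about why $M(\{0\})$ can be taken to vanish (equivalently, why $x\nu(x) \to 0$ as $x \to \infty$, or how to treat the contrary case).
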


As a special case, we obtain a result that was given in~\cite{bib:g84} with an erroneous proof, unless the index of stability is $2$ or $1/n$ for some $n = 1, 2, \ldots$; we refer to~\cite{bib:s15} for a detailed discussion.

\begin{corollary}
\label{cor:stable}
All stable distributions on $\R$ have bell-shaped density functions.
\end{corollary}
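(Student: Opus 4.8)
The plan is to deduce Corollary~\ref{cor:stable} directly from Corollary~\ref{cor:bell}. Recall that a non-degenerate stable law on $\R$ with index of stability $\alpha \in (0, 2]$ is infinitely divisible, its Gaussian coefficient $a \ge 0$ vanishes unless $\alpha = 2$, and its L\'evy measure is of the form $\nu(x) dx$ with
\formula{
 \nu(x) = c_+ x^{-1-\alpha} \ind_{(0, \infty)}(x) + c_- |x|^{-1-\alpha} \ind_{(-\infty, 0)}(x) ,
}
where $c_\pm \ge 0$ and $c_+ + c_- > 0$ when $\alpha < 2$. The case $\alpha = 2$ is the Gaussian law, whose density $(4 \pi a)^{-1/2} e^{-(x - b)^2 / (4 a)}$ is bell-shaped by the elementary example $e^{-x^2}$ recalled in the introduction (and which also fits Corollary~\ref{cor:bell} trivially with $\nu \equiv 0$); so from now on I assume $\alpha \in (0, 2)$.

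First I would check that this $\nu$ satisfies the hypotheses of Corollary~\ref{cor:bell}. For $x > 0$ we have $x \nu(x) = c_+ x^{-\alpha}$ and $x \nu(-x) = c_- x^{-\alpha}$, and $x \mapsto x^{-\alpha}$ is completely monotone on $(0, \infty)$ for every $\alpha > 0$, being a constant multiple of $\int_0^\infty t^{\alpha - 1} e^{-x t} dt$; this gives~\ref{cor:bell:a}. Condition~\ref{cor:bell:b} holds because $\int_{-1}^1 x^2 \nu(x) dx$ is a constant multiple of $\int_0^1 x^{1 - \alpha} dx$, which is finite precisely since $\alpha < 2$. Finally, with $\nu$ as above a standard computation shows that $\int_{-\infty}^\infty (e^{-i \xi x} - (1 + |x| - i \xi x) e^{-|x|}) \nu(x) dx$ equals the characteristic exponent of a stable law of index $\alpha$ (the real part being a multiple of $|\xi|^\alpha$ and the imaginary part a multiple of $|\xi|^\alpha \sign \xi$, with the ratio $(c_+ - c_-)/(c_+ + c_-)$ encoding the skewness, and a $\xi \log|\xi|$ term appearing when $\alpha = 1$) plus a term of the form $-i b' \xi + c'$ with $b', c' \in \R$; absorbing $b'$ into the drift and $c'$ into the constant, every stable density of index $\alpha \in (0, 2)$ has Fourier transform of the form required in Corollary~\ref{cor:bell}.

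It remains to verify the regularity hypotheses of Corollary~\ref{cor:bell}: that the stable density $f$ is smooth, converges to zero at $\pm\infty$, and is decreasing near $\infty$ and increasing near $-\infty$. Smoothness of non-degenerate stable densities is classical (they are real-analytic in the interior of their support), and the tail behaviour $f(x) \sim c_\pm |x|^{-1-\alpha}$ as $x \to \pm\infty$ when $\alpha < 2$ — together with one-sided support and rapid decay at the finite endpoint when $\alpha < 1$ and the law is totally skewed — gives $f \to 0$ at $\pm\infty$. Monotonicity near $\pm\infty$ follows from unimodality of stable distributions (Yamazato's theorem): $f$ has a unique mode, so it is increasing to the left of it and decreasing to the right. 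Since bell-shapedness is translation invariant, the drift $b$ is irrelevant. Corollary~\ref{cor:bell} now applies and yields that $f$ is bell-shaped.

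The only genuinely non-routine input, beyond matching the L\'evy--Khintchine data, is the classical theory of stable laws — smoothness, tail asymptotics, and especially unimodality — needed to confirm the shape hypotheses of Corollary~\ref{cor:bell}; the rest is a direct substitution, so I expect no substantial obstacle once those facts are cited.
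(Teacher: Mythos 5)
Your proposal is correct and follows essentially the same route as the paper: the paper's proof also simply notes that the stable Lévy density satisfies $x\nu(x) = c_+ x^{-\alpha}$ and $x\nu(-x) = c_- x^{-\alpha}$, which are completely monotone, and invokes Corollary~\ref{cor:bell}. You additionally verify the regularity and shape hypotheses (smoothness, decay, unimodality) that the paper leaves implicit, which is careful but not a different argument.
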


As mentioned above, Corollary~\ref{cor:stable} was proved by T.~Simon in~\cite{bib:s15} in the one-sided case, that is, when the distribution is concentrated on $(0, \infty)$. The argument used in~\cite{bib:s15} involves the representation of the density $f$ of a~positive stable distribution as a convolution of a P\'olya frequency function and a~completely monotone function on $(0, \infty)$. Although this is not stated explicitly in~\cite{bib:s15}, the proof given there can be easily adapted to prove a special case of Theorem~\ref{thm:bell} corresponding to functions $f$ which are concentrated on $(0, \infty)$; this extension is in fact applied in~\cite{bib:js15}. A remark at the end of~\cite{bib:s15} explains that a similar proof in the two-sided case is not possible, because one of the convolution factors is no longer completely monotone.

The idea of the proof of Theorem~\ref{thm:bell} is very much inspired by T.~Simon's work: we show that it is enough to represent $f$ as a convolution of a P\'olya frequency function and what we call an \emph{absolutely monotone-then-completely monotone} function. This, however, requires a completely different approach, which turns out to be shorter and more elementary than the method of~\cite{bib:s15}.

Three plausible extensions of Theorem~\ref{thm:bell} are disproved in Section~\ref{sec:ex}. A complete description of all bell-shaped functions remains a widely open problem: there is no good conjecture on their characterisation, and the author would be very surprised if Theorem~\ref{thm:bell} described all of them. This question appears to be closely related to the study of zeroes of holomorphic functions; we refer the reader to~\cite{bib:be06,bib:f08,bib:k68,bib:kk00,bib:p43} for further discussion and references.

The article is structured as follows. After a formal definition of the class of bell-shaped functions in Section~\ref{sec:bell}, we prove in Section~\ref{sec:amcm} that absolutely monotone-then-completely monotone functions are weakly bell-shaped. The definition of P\'olya frequency functions and their variation diminishing property is discussed in Section~\ref{sec:polya}. Section~\ref{sec:main} contains the proof of main results. Finally, in Section~\ref{sec:ex} we discuss some examples and counter-examples.

\subsection*{Acknowledgements}

I learned about the bell-shape from Thomas Simon during his seminar talk in Wroc{\l}aw, and I thank him for stimulating discussions about the problems considered here. I thank Alexandre Eremenko for sharing with me his broader view on the subject, and in particular for letting me know about references~\cite{bib:h50,bib:kk00,bib:p43,bib:hw55} in a \emph{MathOverflow} discussion at~\cite{bib:mo}. I also thank Takahiro Hasebe and the anonymous referee for pointing out errors in the preliminary version of this article.

%
%

\section{Bell-shaped functions}
\label{sec:bell}

All functions and measures in this article are Borel, and if $f$ is a measure, we denote the density function of (the absolutely continuous part of) $f$ by the same symbol $f$. A~function or a measure is said to be concentrated on a given set if it is equal to zero in the complement of this set. The Gauss--Weierstrass kernel is defined by
\formula{
 G_t(x) & = \frac{1}{\sqrt{4 \pi t}} \, e^{-x^2 / (4 t)}
}
for $t > 0$ and $x \in \R$. We say that a function $f : \R \to \R$ \emph{changes its sign} exactly $n$ times if $n + 1$ is the maximal length $m$ of a strictly increasing sequence $x_1, x_2, \ldots, x_m \in \R$ such that $f(x_j) f(x_{j+1}) < 0$ for $j = 1, 2, \ldots, m - 1$. If $f$ is differentiable and $f'(x) \ne 0$ whenever $f(x) = 0$, then $f$ changes its sign $n$ times if and only if it has $n$ zeroes.

\begin{definition}
A smooth function function $f : \R \to \R$ is said to be \emph{strictly bell-shaped} if for every $n = 0, 1, 2, \ldots$ the $n$-th derivative $f^{(n)}$ of $f$ converges to zero at $\pm \infty$ and $f^{(n)}$ changes its sign exactly $n$ times.

A function $f : \R \to \R$ (or a measure $f$ on $\R$) is said to be \emph{weakly bell-shaped} if for every $t > 0$ the function $f * G_t$ is well-defined and strictly bell-shaped.
\end{definition}

If a measure $f$ is weakly bell-shaped, then for every $t > 0$ the function $f * G_t$ is unimodal. Since $f * G_t$ converges vaguely to $f$ as $t \to 0^+$, $f$ is necessarily a unimodal measure: it may contain an atom at some $b \in \R$, and it has a unimodal density function on $\R \setminus \{b\}$ with a maximum at $b$.

In Section~\ref{sec:polya} we will see that strictly bell-shaped functions are weakly bell-shaped, and that for a function (or a measure) $f$ to be weakly bell-shaped it is sufficient to assume that $f * G_t$ is strictly bell-shaped for some sequence of $t > 0$ that converges to zero.

It is easy to see that a pointwise limit of a sequence of functions that change their sign exactly $n$ times is a function that changes its sign at most $n$ times. As a consequence, if $f_k$ is a sequence of strictly bell-shaped functions such that as $k \to \infty$ the derivatives $f_k^{(n)}$, $n = 0, 1, 2, \ldots$, converge pointwise to the corresponding derivatives $f^{(n)}$ of some function~$f$, and additionally $f$ tends to zero at $\pm \infty$, then either $f$ is constant zero or $f$ is strictly bell-shaped. In particular, if $f$ is a weakly bell-shaped smooth function, then $f$ is strictly bell-shaped: as $t \to 0^+$, the derivatives of strictly bell-shaped functions $f * G_t$ converge pointwise to the corresponding derivatives of $f$.

In a similar way, if $f_k$ is a sequence of weakly bell-shaped functions (or measures) which converges vaguely to a function (or a measure) $f$ as $k \to \infty$, and additionally $f$ tends to $0$ at $\pm\infty$, then either $f$ is constant zero or $f$ is weakly bell-shaped. Indeed: if $f$ is not identically zero and it converges to zero at $\pm \infty$, then the modes of $f_k$ are necessarily bounded as $k \to \infty$. This implies that the derivatives of $f_k * G_t$ converge pointwise to the derivatives of $f * G_t$, and so $f * G_t$ is either constant zero or strictly bell-shaped.

%
%

\section{$\amcm$ functions}
\label{sec:amcm}

In this section introduce a class of $\amcm$ functions and we prove that they are weakly bell-shaped. We begin with definitions and notation. We say that a smooth function $f : \R \to \R$ has a \emph{zero of multiplicity $N$} at $x$ if $f^{(n)}(x) = 0$ for $n = 0, 1, 2, \ldots, N - 1$ and $f^{(N)}(x) \ne 0$. We denote one-sided limits of a function $f$ at $x$ by $f(x^+)$ and $f(x^-)$. We will often consider measures that are sums of an absolutely continuous part (which we identify with the corresponding density function) and a Dirac measure. Since we are more tempted to think about these measures as functions with an extra atom, we will call such measures \emph{extended functions}.

\begin{definition}
A function $f : \R \to \R$ is \emph{completely monotone} on an open interval $I$ if it is smooth in $I$ and $(-1)^n f^{(n)}(x) \ge 0$ for every $n = 0, 1, 2, \ldots$ and $x \in I$. When the inequality $f^{(n)}(x) \ge 0$ is satisfied instead, then $f$ is said to be \emph{absolutely monotone} on $I$. We write $\cm_+$ for the class of completely monotone functions on $(0, \infty)$ and $\cm$ for the class of completely monotone functions on $\R$. Similarly, we write $\am_-$ and $\am$ for the classes of absolutely monotone functions on $(-\infty, 0)$ and on $\R$, respectively.

We say that $f : \R \to \R$ is \emph{absolutely monotone-then-completely monotone} if it is absolutely monotone on $(-\infty, 0)$ and completely monotone on $(0, \infty)$. More generally, we allow $f$ to be an extended function, comprising an absolutely monotone-then-completely monotone function on $\R \setminus \{0\}$, and possibly a non-negative atom at $0$. We write $\amcm$ for the class of absolutely monotone-then-completely monotone extended functions.
\end{definition}

The Laplace transform of a measure $\mu$ on $\R$ or a function $f : \R \to \R$ is defined by
\formula{
 \laplace \mu(x) & = \int_{\R} e^{-s x} \mu(ds) , &\qquad
 \laplace f(z) & = \int_{-\infty}^\infty e^{-x z} f(x) dx ,
}
whenever the integrals converge. By Bernstein's theorem, $f \in \cm_+$ if and only if $f(x) = \laplace \mu(x)$ for $x > 0$, where $\mu$ is some non-negative measure concentrated on $[0, \infty)$, whose Laplace transform is convergent on $(0, \infty)$. The measure $\mu$ is uniquely determined by $f$, and it is often called the \emph{Bernstein measure} of $f$. Clearly, $f$ converges to zero at $\infty$ if and only if $\mu(\{0\}) = 0$.

By Fubini's theorem, $f \in \cm_+$ is locally integrable if and only if the Bernstein measure $\mu$ of $f$ satisfies
\formula[eq:loc:int]{
 \int_{[0, \infty)} \frac{1}{1 + s} \, \mu(ds) < \infty .
}
In this case, again by Fubini's theorem, the Laplace transform of $f$ is the Stieltjes transform of $\mu$, that is,
\formula[eq:cm:stieltjes]{
 \laplace f(z) & = \int_0^\infty e^{-x z} f(x) dx = \int_{[0, \infty)} \frac{1}{z + s} \, \mu(ds)
}
when $\re z > 0$. The right-hand side is well-defined for all $z \in \C \setminus (-\infty, 0]$, and if $f$ is integrable, then~\eqref{eq:cm:stieltjes} holds also when $z \in i \R$. We will need the following extension of this property.

\begin{lemma}\label{lem:cm:reg}
Suppose that $f \in \cm_+$, $f$ is locally integrable and $f$ converges to zero at~$\infty$. Let $\mu$ be the Bernstein measure of $f$. Then
\formula[eq:cm:stieltjes:reg]{
 \laplace f(z) & = \int_0^\infty e^{-z x} f(x) dx = \int_{(0, \infty)} \frac{1}{z + s} \, \mu(ds)
}
for all $z \in i \R \setminus \{0\}$, where the former integral is understood as an improper integral.
\end{lemma}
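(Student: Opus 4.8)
The plan is to reduce everything to an application of Fubini's theorem followed by dominated convergence, the only genuine input being the local integrability of $f$.

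First I would record two elementary facts. Since $f$ tends to zero at $\infty$, its Bernstein measure $\mu$ satisfies $\mu(\{0\}) = 0$, so $\mu$ is concentrated on $(0, \infty)$; and since $f$ is locally integrable, condition~\eqref{eq:loc:int} gives $\int_{(0, \infty)} (1 + s)^{-1} \mu(ds) < \infty$, which in particular forces $\int_0^R f(x)\, dx < \infty$ for every $R > 0$. Next, for a fixed $\xi \in \R \setminus \{0\}$ and all $s > 0$ one has the elementary bound $|i \xi + s|^{-1} = (s^2 + \xi^2)^{-1/2} \le \sqrt{2} \, \max\{1, |\xi|^{-1}\} \, (1 + s)^{-1}$, so that $s \mapsto (i \xi + s)^{-1}$ is absolutely integrable with respect to $\mu$; hence the right-hand side of~\eqref{eq:cm:stieltjes:reg} is a well-defined, absolutely convergent integral for every $z = i \xi$ with $\xi \ne 0$.

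Then I would insert the Bernstein representation $f(x) = \int_{(0, \infty)} e^{-s x} \mu(ds)$, valid for $x > 0$, into the truncated integral. Since $\int_0^R \int_{(0, \infty)} e^{-s x} \mu(ds)\, dx = \int_0^R f(x)\, dx < \infty$, Fubini's theorem applies and, after evaluating the inner integral $\int_0^R e^{-(s + i \xi) x}\, dx = (1 - e^{-(s + i \xi) R}) / (s + i \xi)$, gives
\[
 \int_0^R e^{-i \xi x} f(x)\, dx = \int_{(0, \infty)} \frac{1 - e^{-(s + i \xi) R}}{s + i \xi} \, \mu(ds)
\]
for every $R > 0$. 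Finally I would let $R \to \infty$: on $(0, \infty)$ the integrand converges pointwise to $(s + i \xi)^{-1}$ and is dominated, uniformly in $R$, by $2 (s^2 + \xi^2)^{-1/2}$, which is $\mu$-integrable by the previous step. Dominated convergence then shows that the improper integral $\int_0^\infty e^{-i \xi x} f(x)\, dx = \lim_{R \to \infty} \int_0^R e^{-i \xi x} f(x)\, dx$ exists and equals $\int_{(0, \infty)} (s + i \xi)^{-1} \mu(ds)$, which is precisely~\eqref{eq:cm:stieltjes:reg}.

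I do not expect a serious obstacle here; the argument is essentially an exchange-of-limits computation. The one point calling for care is the convergence of the Stieltjes-type integral on the imaginary axis away from the origin, and this is handled by the local integrability of $f$ through~\eqref{eq:loc:int} together with the elementary bound on $|i \xi + s|^{-1}$: near $s = 0$ the factor $(s + i \xi)^{-1}$ stays bounded, which is exactly where the hypothesis $\xi \ne 0$ enters, and near $s = \infty$ it behaves like $s^{-1}$, matching the integrability of $(1 + s)^{-1}$ against $\mu$.
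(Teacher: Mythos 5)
Your proof is correct, and it takes a genuinely different and cleaner route than the paper. The paper approaches the point $z \in i\R \setminus \{0\}$ from inside the right half-plane: it starts from the already-established identity $\laplace f(z + \eps) = \int_{[0,\infty)} (z + \eps + s)^{-1} \mu(ds)$ for $\eps > 0$, passes $\eps \to 0^+$ on the Stieltjes side by dominated convergence, and then, to identify the limit of $\laplace f(z+\eps)$ with the improper Laplace integral, performs two integrations by parts (splitting the integral at $x = 1$, integrating by parts on $(1, \infty)$ to gain a factor of $1/(z+\eps)$, taking the limit, and then integrating by parts back). You instead approach $\infty$ directly: you truncate the improper integral at $R$, apply Fubini on $(0, R) \times (0, \infty)$ (justified by $\int_0^R f < \infty$, i.e.\ local integrability), evaluate the inner integral in closed form to get $(1 - e^{-(s+i\xi)R})/(s + i\xi)$, and then let $R \to \infty$ by dominated convergence with the dominating function $2/|s + i\xi|$, which is $\mu$-integrable by~\eqref{eq:loc:int} and $\xi \ne 0$. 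Your argument avoids the integration-by-parts gymnastics entirely and establishes the existence of the improper integral and its value in a single stroke; the paper's proof has the mild advantage of leaning directly on the already-recorded formula~\eqref{eq:cm:stieltjes} for $\re z > 0$, but both are about the same length and both are correct.
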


\begin{proof}
Note that $\mu(\{0\}) = 0$ because $f$ converges to zero at $\infty$. Let $z \in i \R \setminus \{0\}$. By~\eqref{eq:cm:stieltjes}, for $\eps > 0$ we have
\formula{
 \laplace f(z + \eps) & = \int_{[0, \infty)} \frac{1}{z + \eps + s} \, \mu(ds) .
}
The dominated convergence theorem implies that the right-hand side converges to the right-hand side of~\eqref{eq:cm:stieltjes:reg} as $\eps \to 0^+$. On the other hand, the left-hand side can be written as
\formula{
 \laplace f(z + \eps) & = \int_0^1 e^{-x (z + \eps)} f(x) dx + \int_1^\infty e^{-x (z + \eps)} f(x) dx \\
 & = \int_0^1 e^{-x (z + \eps)} f(x) dx + \frac{e^{-z - \eps} f(1)}{z + \eps} + \int_1^\infty \frac{e^{-x (z + \eps)} f'(x)}{z + \eps} \, dx ;
}
in the second equality we integrated by parts. Again by the dominated convergence theorem,
\formula{
 \lim_{\eps \to 0^+} \laplace f(z + \eps) & = \int_0^1 e^{-x z} f(x) dx + \frac{e^{-z} f(1)}{z} + \int_1^\infty \frac{e^{-x z} f'(x)}{z} \, dx .
}
Another integration by parts leads to
\formula{
 \lim_{\eps \to 0^+} \laplace f(z + \eps) & = \int_0^1 e^{-x z} f(x) dx + \lim_{\alpha \to \infty} \biggl(\frac{e^{-\alpha z} f(\alpha)}{z} + \int_1^\alpha e^{-x z} f(x) dx\biggr) \\
 & = \lim_{\alpha \to \infty} \int_0^\alpha e^{-x z} f(x) dx ,
}
and the proof is complete.
\end{proof}

For a detailed treatment of completely monotone functions, Stieltjes functions and related notions, we refer the reader to~\cite{bib:ssv12}.

Denote $\check{f}(x) = f(-x)$. Clearly, $f \in \cm_+$ if and only if $\check{f} \in \am_-$, and $f \in \cm$ if and only if $\check{f} \in \am$. It follows that $f \in \amcm$ if and only if $f$ has a non-negative atom at $0$ of mass $m = f(\{0\})$, and there are non-negative measures $\mu_+$ and $\mu_-$ concentrated on $[0, \infty)$ such that
\formula{
 f(x) & = \laplace \mu_-(-x) &\qquad& \text{for $x < 0$,} \\
 f(x) & = \laplace \mu_+(x) &\qquad& \text{for $x > 0$.}
}
Clearly, the measures $\mu_+$ and $\mu_-$ are uniquely determined by $f$. By an analogy with the case of completely monotone functions, we call $\mu_+$ and $\mu_-$ the \emph{Bernstein measures} of the function $f \in \amcm$. In Section~\ref{sec:main} we will need the following result.

\begin{corollary}\label{cor:amcm:reg}
Suppose that $f \in \amcm$, $f$ is locally integrable and $f$ converges to zero at $\pm\infty$. Let $\mu_+$ and $\mu_-$ be the Bernstein measures of $f$. Then
\formula[eq:amcm:stieltjes:reg]{
 \laplace f(z) & = \int_{-\infty}^\infty e^{-z x} f(x) dx = f(\{0\}) + \int_{(0, \infty)} \frac{1}{z + s} \, \mu_+(ds) - \int_{(0, \infty)} \frac{1}{z - s} \, \mu_-(ds)
}
for all $z \in i \R \setminus \{0\}$, where the integral in the definition of $\laplace f(z)$ is understood as an improper integral.
\end{corollary}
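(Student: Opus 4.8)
The plan is to reduce everything to Lemma~\ref{lem:cm:reg} by splitting $f$ into its atom at the origin and its two one-sided restrictions. Write $m = f(\{0\}) \ge 0$, $f_+ = f \ind_{(0, \infty)}$ and $f_- = f \ind_{(-\infty, 0)}$, so that $f = m \delta_0 + f_+ + f_-$ as measures. Because $f$ is completely monotone on $(0, \infty)$ and locally integrable, $f_+$ belongs to $\cm_+$, is locally integrable, and converges to zero at $\infty$; its Bernstein measure is $\mu_+$, and $\mu_+(\{0\}) = 0$ since $f_+$ vanishes at $\infty$. Similarly, since $f$ is absolutely monotone on $(-\infty, 0)$, the reflection $\check{f}_- \in \cm_+$ is locally integrable, converges to zero at $\infty$, and has Bernstein measure $\mu_-$ with $\mu_-(\{0\}) = 0$. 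In particular the one-sided improper integrals $\int_0^\infty e^{-z x} f_+(x) \, dx$ and $\int_{-\infty}^0 e^{-z x} f_-(x) \, dx$ converge for every $z \in i\R \setminus \{0\}$, and by definition the improper integral $\int_{-\infty}^\infty e^{-z x} f(x) \, dx$ is the sum of these two together with the contribution $m$ of the atom.

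First I would apply Lemma~\ref{lem:cm:reg} to $f_+$, which gives
\formula{
 \int_0^\infty e^{-z x} f(x) \, dx & = \int_{(0, \infty)} \frac{1}{z + s} \, \mu_+(ds)
}
for all $z \in i\R \setminus \{0\}$. Next I would apply the same lemma to $\check{f}_-$: for every $w \in i\R \setminus \{0\}$,
\formula{
 \int_0^\infty e^{-w x} f(-x) \, dx & = \int_{(0, \infty)} \frac{1}{w + s} \, \mu_-(ds) .
}
Substituting $x \mapsto -x$ in the left-hand side and then replacing $w$ by $-z$ (legitimate because $i\R \setminus \{0\}$ is symmetric about the origin) turns this into
\formula{
 \int_{-\infty}^0 e^{-z x} f(x) \, dx & = \int_{(0, \infty)} \frac{1}{-z + s} \, \mu_-(ds) = - \int_{(0, \infty)} \frac{1}{z - s} \, \mu_-(ds) .
}
Adding the two displayed identities together with the contribution $m = f(\{0\})$ of the atom at $0$ yields~\eqref{eq:amcm:stieltjes:reg}.

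There is no genuine obstacle here: the entire analytic content of the statement is already carried by Lemma~\ref{lem:cm:reg}. The only points that require a moment of care are the bookkeeping for the atom at the origin, the verification that $f_+$ and $\check{f}_-$ indeed satisfy the hypotheses of the lemma (local integrability near $0$ and decay at infinity, both inherited from $f$), and the observation that the Bernstein measures $\mu_+$ and $\mu_-$ carry no mass at $0$, which is precisely what allows the integrals in~\eqref{eq:amcm:stieltjes:reg} to be taken over the open half-line $(0, \infty)$.
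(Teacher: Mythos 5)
Your proposal is correct and follows essentially the same route as the paper: the paper's proof likewise applies Lemma~\ref{lem:cm:reg} to $f \ind_{(0,\infty)}$ and $\check{f} \ind_{(0,\infty)}$ and handles the negative half-line by the change of variables $x \mapsto -x$, $z \mapsto -z$. Your additional bookkeeping (the atom at $0$, the hypotheses of the lemma for each piece, and $\mu_\pm(\{0\})=0$) is accurate but not a departure from the paper's argument.
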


\begin{proof}
It suffices to apply Lemma~\ref{lem:cm:reg} to completely monotone functions $f \ind_{(0, \infty)}$ and $\check{f} \ind_{(0, \infty)}$, and note that
\formula[]{\notag
 \lim_{\alpha \to -\infty} \int_{(\alpha, 0)} e^{-z x} f(x) dx & = \lim_{\alpha \to \infty} \int_{(0, \alpha)} e^{z x} \check{f}(x) dx = \int_{(0, \infty)} \frac{1}{-z + s} \, \mu_-(ds) . \qedhere
}
\end{proof}

The main result of this section, Theorem~\ref{thm:amcm}, requires three auxiliary statements.

\begin{lemma}
\label{lem:cm:rec}
Let $f(x) = \laplace \mu(x)$ for $x > 0$ and $f(x) = 0$ for $x \le 0$, where $\mu$ is a measure concentrated on $[0, \infty)$ with all moments finite. Then for every $t > 0$ and $n = 0, 1, 2, \ldots$ there is a polynomial $P$ of degree at most $n - 1$ and a function $u \in \am$ such that
\formula[eq:cm:rec]{
 (f * G_t)^{(n)}(x) & = (P(x) + (-1)^n u(x)) G_t(x)
}
for all $x \in \R$. (For $n = 0$ we understand that $P$ is constant zero).
\end{lemma}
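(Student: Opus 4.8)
The plan is to proceed by induction on $n$, using the heat equation satisfied by $f * G_t$ together with the structure of the Gauss--Weierstrass kernel. The base case $n = 0$ is immediate: since $\mu$ has all moments finite, $f(x) = \laplace\mu(x)$ is finite and smooth for $x > 0$, and the convolution $(f * G_t)(x) = \int_0^\infty f(y) G_t(x - y)\, dy$ is well-defined; moreover, writing $f(y) = \int_{[0,\infty)} e^{-sy} \mu(ds)$ and exchanging the order of integration (justified by the moment assumption on $\mu$ and the Gaussian decay of $G_t$), one computes $(f * G_t)(x) = \int_{[0,\infty)} e^{-sx + s^2 t} \Phi_t(x - 2st)\, \mu(ds)$ where $\Phi_t$ is a suitable Gaussian-related factor; the key point is that $(f * G_t)(x) = u(x) G_t(x)$ for a function $u \in \am$, because completing the square turns the Gaussian convolution of a completely monotone function into $G_t(x)$ times an integral of exponentials $e^{\lambda x}$ with $\lambda \le 0$ against a positive measure — and such integrals, together with their derivatives, are absolutely monotone. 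So $P \equiv 0$ and $u \in \am$ as claimed.

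For the inductive step, suppose $(f * G_t)^{(n)}(x) = (P(x) + (-1)^n u(x)) G_t(x)$ with $\deg P \le n-1$ and $u \in \am$. Differentiating once and using $G_t'(x) = -\tfrac{x}{2t} G_t(x)$, I get
\formula{
 (f * G_t)^{(n+1)}(x) & = \Bigl( P'(x) + (-1)^n u'(x) - \tfrac{x}{2t}\bigl(P(x) + (-1)^n u(x)\bigr) \Bigr) G_t(x) .
}
The terms $P'(x) - \tfrac{x}{2t} P(x)$ form a polynomial of degree at most $n$; the term $(-1)^n u'(x)$ is, up to sign, a function in $\am$ (since $\am$ is closed under differentiation); and the remaining term $-(-1)^n \tfrac{x}{2t} u(x) = (-1)^{n+1} \tfrac{x}{2t} u(x)$ is $(-1)^{n+1}$ times a function in $\am$, because $\am$ is closed under multiplication by $x$ (if $v \in \am$ then $(xv)^{(k)} = x v^{(k)} + k v^{(k-1)} \ge 0$ for $x > 0$... but one must be careful: multiplication by $x$ preserves $\am$ only on $(0,\infty)$ in general, whereas here $u$ is absolutely monotone on all of $\R$). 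The cleanest fix is to absorb the sign: define $\tilde u(x) = u'(x) - \tfrac{x}{2t} u(x)$ and check directly that $\tilde u \in \am$, i.e.\ $\tilde u^{(k)}(x) \ge 0$ for all $k \ge 0$ and all $x \in \R$. Since $u \in \am$ means $u(x) = \int_{(-\infty, 0]} e^{\lambda x} \rho(d\lambda)$ for a positive measure $\rho$ (the two-sided Bernstein/absolute-monotonicity representation, valid because $u$ is absolutely monotone on all of $\R$ and hence a Laplace transform of a measure on $(-\infty, 0]$), one computes $\tilde u(x) = \int_{(-\infty,0]} (\lambda - \tfrac{x}{2t}) e^{\lambda x} \rho(d\lambda)$, and this together with all its derivatives is manifestly a sum of terms $x^j e^{\lambda x}$ integrated against positive measures with $\lambda \le 0$ — such expressions have all derivatives nonnegative on $\R$ only if we are more careful, so in fact the $x$-factor forces us instead to keep $u$ and the polynomial correction bundled together exactly as in the statement. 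Thus I set the new polynomial to be $P_{n+1}(x) = P'(x) - \tfrac{x}{2t} P(x)$ (degree $\le n$) and the new $\am$-function to be $u_{n+1} = u' - \tfrac{x}{2t} u$, and the structural claim follows once I verify $u_{n+1} \in \am$.

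The main obstacle is precisely this verification that $u' - \tfrac{x}{2t} u \in \am$ whenever $u \in \am$. Multiplication by $x$ does not preserve absolute monotonicity on all of $\R$ in the naive sense — $x$ itself is not in $\am$ since it is negative for $x < 0$. The resolution is that one does not need $x u(x)$ to be in $\am$ in isolation; one needs the whole kernel identity to go through, and the right framework is to observe that $(f * G_t)^{(n)}/G_t$ satisfies a first-order ODE driven by the heat equation, and that the space of functions of the form $(\text{polynomial of degree} \le n-1) + (-1)^n \am$ is invariant under the operator $v \mapsto v' - \tfrac{x}{2t} v$ composed with the parity flip $(-1)^n \to (-1)^{n+1}$. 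Concretely: if $v = P + (-1)^n u$ with $u \in \am$, then $v' - \tfrac{x}{2t} v = \bigl(P' - \tfrac{x}{2t} P\bigr) + (-1)^n\bigl(u' - \tfrac{x}{2t} u\bigr)$, and I claim $-\bigl(u' - \tfrac{x}{2t} u\bigr) \in \am$, equivalently $\tfrac{x}{2t} u - u' \in \am$. Using $u(x) = \int_{(-\infty,0]} e^{\lambda x}\rho(d\lambda)$, the $k$-th derivative of $\tfrac{x}{2t}u - u'$ is $\int (\tfrac{x}{2t}\lambda^k + \tfrac{k}{2t}\lambda^{k-1} - \lambda^{k+1}) e^{\lambda x} \rho(d\lambda)$; for $x > 0$ and $\lambda \le 0$ each summand... is not obviously signed, confirming that the honest statement is the bundled one, and the correct inductive invariant must track the polynomial and the $\am$-part together with the cross-term $\tfrac{x}{2t}u$ redistributed into an updated $\am$-function via the representation. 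I would carry out this bookkeeping carefully, checking that the degree bound $\deg P \le n - 1$ is maintained (it increases by exactly one under $P \mapsto P' - \tfrac{x}{2t}P$, matching $n \mapsto n+1$), and that closure of $\am$ under differentiation handles the $u'$ term, leaving only the sign-tracking of the $\tfrac{x}{2t}u$ contribution — which, absorbed correctly, is where the factor $(-1)^n$ in the statement comes from.
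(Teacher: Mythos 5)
Your inductive step has a genuine gap that the proposal itself half-acknowledges but never closes. Differentiating the identity $(f*G_t)^{(n)} = (P + (-1)^n u)\,G_t$ and using $G_t'/G_t = -x/(2t)$ produces the term $(-1)^n\bigl(u'(x) - \tfrac{x}{2t}u(x)\bigr)$, and you correctly observe that $\tfrac{x}{2t}u(x)$ is neither a polynomial nor (up to sign) an element of $\am$, so it cannot be absorbed into either slot of the claimed decomposition. At that point the proof stops: ``I would carry out this bookkeeping carefully'' is a promissory note, not an argument, and as written the induction does not go through. There is also a sign error in your representation of $\am$: an absolutely monotone function on $\R$ is $\int_{[0,\infty)} e^{\lambda x}\,\rho(d\lambda)$ with $\lambda \ge 0$ (for $\lambda \le 0$ the odd derivatives of $e^{\lambda x}$ are negative, which gives $\cm$, not $\am$), so the explicit computation you sketch would break down if carried out literally.

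The paper avoids the problematic cross-term entirely by running the induction on $f$ rather than on the identity: it applies the inductive hypothesis to $g = -f'\,\ind_{(0,\infty)}$, whose Bernstein measure $s\,\mu(ds)$ again has all moments finite, and uses $(f*G_t)' = f(0^+)\,G_t - g*G_t$ (the boundary term comes from the jump of $f$ at $0$). Then $(f*G_t)^{(n+1)} = \bigl(f(0^+)\,G_t^{(n)}/G_t - P - (-1)^n u\bigr)G_t$, where $G_t^{(n)}/G_t$ is a Hermite-type polynomial of degree $n$; the $\am$ function is carried over unchanged with the sign flip $-(-1)^n = (-1)^{n+1}$, and no multiplication by $x$ ever touches $u$. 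Your route can in fact be repaired — writing $u(x) = 2t\int_0^\infty e^{\sigma x - t\sigma^2} f(2t\sigma)\,d\sigma$ from the base case, one has $\bigl(\tfrac{x}{2t}-\sigma\bigr)e^{\sigma x - t\sigma^2} = \tfrac{1}{2t}\partial_\sigma e^{\sigma x - t\sigma^2}$, and integration by parts in $\sigma$ converts $\tfrac{x}{2t}u - u'$ into $-f(0^+)$ plus the same expression with $f$ replaced by $-f'$, i.e.\ exactly the paper's recursion — but this step is the entire content of the inductive argument and is missing from your proposal.
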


\begin{proof}
We proceed by induction. For $n = 0$ the function
\formula{
 u(x) & = \frac{1}{G_t(x)} \int_0^\infty f(y) G_t(x - y) dy \\
 & = \int_0^\infty f(y) e^{-(y^2 - 2 x y) / (4 t)} dy \\
 & = \int_0^\infty e^{x y / (2 t)} f(y) e^{-y^2 / (4 t)} dy \\
 & = 2 t \int_0^\infty e^{s x} f(2 t s) e^{-t s^2} ds
}
is clearly absolutely monotone on $\R$, as desired.

Suppose now that property~\eqref{eq:cm:rec} holds for some fixed $n$, and apply it to the function~$g$ defined by $g(x) = -f'(x)$ for $x > 0$, $g(x) = 0$ for $x \le 0$. Note that on $(0, \infty)$, $g$ is the Laplace transform of the measure $s \mu(ds)$, and $s \mu(ds)$ also has all moments finite, so $g$ satisfies the assumptions for~\eqref{eq:cm:rec}. It follows that
\formula{
 (g * G_t)^{(n)} & = (P + (-1)^n u) G_t
}
for some polynomial $P$ of degree $n - 1$ and some $u \in \am$. Recall that $\mu$ is a finite measure, so that $f(0^+) = \mu(\R)$ is finite (here the assumption on the moments of $\mu$ is used). It follows that
\formula{
 (f * G_t)'(x) & = f(0^+) G_t(x) - g * G_t(x)
}
for all $x \in \R$. Therefore,
\formula{
 (f * G_t)^{(n+1)} & = f(0^+) G_t^{(n)}(x) - (g * G_t)^{(n)}(x) \\
 & = \biggl(\frac{f(0^+) G_t^{(n)}(x)}{G_t(x)} - P(x) - (-1)^n u(x)\biggr) G_t(x)
}
for all $x \in \R$. Since $G_t^{(n)} / G_t$ is a polynomial of degree $n$ (namely, this is the $n$-th Hermite polynomial evaluated at $x / \sqrt{2 t}$, up to multiplication by a constant), the desired result for the derivative of degree $n + 1$ follows. This completes the proof by induction.
\end{proof}

By replacing $f$ with $\check{f}$, we immediately obtain the following corollary.

\begin{corollary}
\label{cor:am:rec}
Let $f(x) = \laplace \mu(-x)$ for $x < 0$ and $f(x) = 0$ for $x \ge 0$, where $\mu$~is a~measure concentrated on $[0, \infty)$ with all moments finite. Then for every $t > 0$ and $n = 0, 1, 2, \ldots$ there is a polynomial $P$ of degree at most $n - 1$ and a function $v \in \cm$ such that
\formula{
 (f * G_t)^{(n)}(x) & = (P(x) + v(x)) G_t(x)
}
for all $x \in \R$. (For $n = 0$ we understand that $P$ is constant zero).\qed
\end{corollary}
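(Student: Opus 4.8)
The plan is to obtain Corollary~\ref{cor:am:rec} directly from Lemma~\ref{lem:cm:rec} by applying the reflection $x \mapsto -x$, under which the classes $\am$ and $\cm$ are interchanged while the Gauss--Weierstrass kernel $G_t$ is left invariant (it is even).

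First I would set $g = \check{f}$, so that $g(x) = \laplace \mu(x)$ for $x > 0$ and $g(x) = 0$ for $x \le 0$; this is exactly the setting of Lemma~\ref{lem:cm:rec}. That lemma then provides, for each fixed $t > 0$ and $n = 0, 1, 2, \ldots$, a polynomial $Q$ of degree at most $n - 1$ (constant zero when $n = 0$) and a function $u \in \am$ such that $(g * G_t)^{(n)}(x) = (Q(x) + (-1)^n u(x)) G_t(x)$ for all $x \in \R$.

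Next I would use the elementary identity $(f * G_t)(x) = (g * G_t)(-x)$, which follows from the substitution $y \mapsto -y$ in the convolution integral together with $G_t(-x) = G_t(x)$. Differentiating $n$ times gives $(f * G_t)^{(n)}(x) = (-1)^n (g * G_t)^{(n)}(-x)$, and inserting the formula from the previous step (and again using that $G_t$ is even) yields $(f * G_t)^{(n)}(x) = \bigl((-1)^n Q(-x) + u(-x)\bigr) G_t(x)$. It then suffices to take $P(x) = (-1)^n Q(-x)$, which is again a polynomial of degree at most $n - 1$, and $v = \check{u}$, which belongs to $\cm$ precisely because $u \in \am$ (as noted just before Corollary~\ref{cor:amcm:reg}).

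I do not expect any genuine obstacle here: the only point requiring a little care is the bookkeeping of the factor $(-1)^n$, which is harmlessly absorbed into the redefinition of the polynomial part, together with the trivial observation that reflecting the argument of $Q$ preserves its degree bound.
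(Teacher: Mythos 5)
Your argument is correct and is exactly the paper's intended proof: the paper disposes of this corollary with the single remark ``by replacing $f$ with $\check{f}$,'' i.e.\ the same reflection of Lemma~\ref{lem:cm:rec}, using that $G_t$ is even and that reflection swaps $\am$ and $\cm$. The sign bookkeeping $(f*G_t)^{(n)}(x) = (-1)^n(g*G_t)^{(n)}(-x)$ and the absorption of $(-1)^n$ into the polynomial part are handled correctly.
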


In order to prove the main theorem of this section, we need one more technical result.

\begin{lemma}
\label{lem:poly:rec}
Suppose that $u \in \am$, $v \in \cm$, and $P$ is a polynomial of degree at most $n$, with coefficient at $x^n$ non-negative. Then the equation
\formula[eq:poly:rec]{
 P(x) + u(x) + (-1)^n v(x) & = 0 ,
}
if not satisfied for all $x \in \R$, has at most $n$ real solutions (counting multiplicities).
\end{lemma}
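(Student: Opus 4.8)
The plan is to argue by induction on $n$, differentiating to reduce the degree of the polynomial part while exploiting that the derivative of an absolutely monotone function is again absolutely monotone, and likewise the derivative of a completely monotone function is (up to sign) completely monotone. For $n = 0$ the claim is that $P(x) + u(x) + v(x) = 0$, with $P$ a nonnegative constant, $u \in \am$, $v \in \cm$, has no real solutions unless it holds identically: indeed $u$ is nondecreasing and $v$ is nonincreasing, so $u + v$ need not be monotone, but here is the key point — I would instead observe that $u(x) + v(x) \ge u(-\infty^+) + v(+\infty^-)$ is false in general, so the base case needs more care. The cleaner route is to note that if $P(x) + u(x) + (-1)^n v(x)$ vanishes at some point $x_0$ but is not identically zero, then, writing $w(x) = P(x) + u(x) + (-1)^n v(x)$, the function $w$ is real-analytic on all of $\R$ (since $u$ extends to an entire function, being absolutely monotone on $\R$ — its Taylor series at any point has nonnegative coefficients and infinite radius of convergence — and similarly for $v$), so its zeros are isolated and of finite multiplicity, and it suffices to bound the total number of zeros counted with multiplicity.

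The inductive step: suppose the statement holds for $n - 1$ and let $w(x) = P(x) + u(x) + (-1)^n v(x)$ with $\deg P \le n$ and leading coefficient nonnegative. Then $w'(x) = P'(x) + u'(x) + (-1)^n v'(x)$; here $u' \in \am$, and $(-1)^n v'(x) = -(-1)^{n-1}(-v'(x))$ with $-v' \in \cm$, so $w'(x) = P'(x) + u'(x) + (-1)^{n-1}\tilde v(x)$ where $\tilde v = -v' \in \cm$ and $\deg P' \le n - 1$. The issue is the sign of the leading coefficient of $P'$: if $P(x) = a_n x^n + \ldots$ with $a_n \ge 0$, then $P'$ has leading coefficient $n a_n \ge 0$, so the inductive hypothesis applies to $w'$ and gives at most $n - 1$ real zeros of $w'$, unless $w' \equiv 0$. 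In the latter degenerate case $w$ is constant, hence (being a nonnegative constant plus $u + (-1)^n v$) either identically zero or nowhere zero, and we are done. Otherwise, between any two consecutive zeros of $w$ there is a zero of $w'$ by Rolle's theorem, and a zero of $w$ of multiplicity $m \ge 2$ is a zero of $w'$ of multiplicity $m - 1$; a standard counting argument then shows that the number of zeros of $w$ counted with multiplicity is at most one more than the number of zeros of $w'$ counted with multiplicity, i.e.\ at most $n$.

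The main obstacle I anticipate is the base case $n = 0$ and, relatedly, making the Rolle-type counting rigorous when $u$ or $v$ may blow up or fail to extend continuously to the endpoints of $\R$ — but in fact $\am$ and $\cm$ here mean absolutely/completely monotone on all of $\R$, so $u$ and $v$ are restrictions of entire functions and there is no boundary issue; $w$ is entire and real on $\R$. For $n = 0$: $w = c + u + v$ with $c \ge 0$, and I claim $w$ has no real zero unless $w \equiv 0$. Since $u$ is absolutely monotone on $\R$ it is nondecreasing and convex (all derivatives nonnegative), hence $u \ge u(x_0) + u'(x_0)(x - x_0)$; similarly $v$ is nonincreasing and convex, $v \ge v(x_0) + v'(x_0)(x - x_0)$. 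If $w(x_0) = 0$ then $w(x) \ge w'(x_0)(x - x_0)$ for all $x$, which forces $w'(x_0) = 0$; but then $w$ has a zero of multiplicity $\ge 2$ at $x_0$, and since $w$ is convex (sum of convex functions plus a constant) with a zero of multiplicity $\ge 2$, i.e.\ a minimum equal to $0$ that is also a critical point where $w'' $ could vanish — iterating, $w^{(k)}(x_0) = 0$ for all $k$ by the same argument applied to $w^{(k)}$ (each $w^{(k)} = u^{(k)} + v^{(k)}$ for $k \ge 1$ is again a sum of an $\am$ and a $\cm$ function, convex, nonneg.\ second derivative), whence $w \equiv 0$ by analyticity. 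This closes the induction and completes the proof.
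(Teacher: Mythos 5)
Your overall structure --- induction on $n$, differentiating to lower the polynomial degree while rewriting $(-1)^n v'$ as $(-1)^{n-1}(-v')$ with $-v' \in \cm$, then bounding the number of zeros via Rolle's theorem --- is precisely the paper's argument, and the inductive step is correct (aside from a harmless sign typo: you wrote $(-1)^n v' = -(-1)^{n-1}(-v')$, but the identity you actually use, and which is correct, is $(-1)^n v' = (-1)^{n-1}(-v')$).

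The base case $n = 0$, however, has a genuine gap. You set $w = c + u + v$, correctly note that $w(x_0) = 0$ forces $w'(x_0) = 0$ since $x_0$ is a global minimum of the non-negative function $w$, and then assert that ``iterating, $w^{(k)}(x_0) = 0$ for all $k$ by the same argument applied to $w^{(k)}$''. This iteration fails in two ways. First, the parenthetical justification that each $w^{(k)} = u^{(k)} + v^{(k)}$ with $k \ge 1$ is again a sum of an $\am$ and a $\cm$ function is false for odd $k$: there $u^{(k)} \in \am$ is non-negative, but $v^{(k)}$ is non-positive (it is minus a $\cm$ function), so $w^{(k)}$ need not be non-negative and a zero of $w^{(k)}$ need not be a minimum. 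Second, even granting convexity, knowing $w(x_0) = w'(x_0) = 0$ does not give $w''(x_0) = 0$: the function $x^2$ is non-negative, convex and analytic with a double zero at the origin but $w''(0) = 2 \ne 0$, so nothing you invoke rules out $w''(x_0) > 0$. (Of course $x^2$ is not of the form $c + u + v$ with $u \in \am$, $v \in \cm$ --- but that is exactly the structural fact your argument fails to exploit.)

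The repair is to use the $\am$/$\cm$ structure \emph{before} differentiating. Since $c \ge 0$, $u(x_0) \ge 0$ and $v(x_0) \ge 0$ sum to zero, each is zero individually. Then $u \ge 0$ is non-decreasing (because $u' \ge 0$) and vanishes at $x_0$, so $u \equiv 0$ on $(-\infty, x_0]$; all one-sided derivatives of $u$ at $x_0$ vanish, and by real-analyticity (which you correctly established) $u \equiv 0$ on $\R$. Similarly $v \equiv 0$, hence $w \equiv 0$. An even quicker route is the Bernstein representation: $u(x) = \int_{[0,\infty)} e^{s x}\,\mu(ds)$ with $\mu \ge 0$, so $u(x_0) = 0$ forces $\mu = 0$ and thus $u \equiv 0$. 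With the base case repaired this way, your proof matches the paper's.
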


\begin{proof}
Again, we proceed by induction. For $n = 0$, the left-hand side of~\eqref{eq:poly:rec} is either constant zero or everywhere strictly positive, as desired. Suppose now that the assertion of the lemma is true for some $n$. Consider a function $f = P + u + (-1)^{n+1} v$, where $u \in \am$, $v \in \cm$, and $P$ is a polynomial of degree at most $n + 1$, with coefficient at $x^{n+1}$ non-negative. Suppose that $f$ has more than $n + 1$ real zeroes (counting multiplicities). By Rolle's theorem and the definition of the multiplicity of a zero, the derivative of $f$ has more than $n$ zeroes (counting multiplicities). However,
\formula{
 f' & = P' + u' + (-1)^n (-v') ,
}
and in the right-hand side $u' \in \am$, $-v' \in \cm$ and $P'$ is a polynomial of degree at most~$n$, with coefficient at $x^n$ non-negative. By the induction hypothesis, $f'$ either has at most $n$ zeroes (counting multiplicities) or it is constant zero. Since we already know that $f'$ has at least $n + 1$ zeroes, it follows that $f'$ is identically zero, which means that $f$ is constant. Since $f$ has at least one zero, $f$ is constant zero, as desired. This completes the proof by induction.
\end{proof}

Recall that an extended function $f \in \amcm$ is locally integrable if and only if the corresponding Bernstein measures $\mu_+$ and $\mu_-$ satisfy condition~\eqref{eq:loc:int}.

\begin{theorem}
\label{thm:amcm}
If $f \in \amcm$, $f$ is locally integrable, $f$ is not identically zero, but $f$ converges to zero at $\pm\infty$, then $f$ is weakly bell-shaped.
\end{theorem}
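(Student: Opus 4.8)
The plan is to deduce Theorem~\ref{thm:amcm} from the three preceding lemmas together with the closure properties of the class of weakly bell-shaped functions recalled in Section~\ref{sec:bell}. First I would reduce to the case where the Bernstein measures have all moments finite. Write $f = f_+ + f_- + m \delta_0$ with $f_+ = f \ind_{(0, \infty)}$, $f_- = f \ind_{(-\infty, 0)}$ and $m = f(\{0\}) \ge 0$, and let $\mu_+, \mu_-$ be the Bernstein measures of $f$; since $f$ converges to zero at $\pm\infty$, both $\mu_\pm$ are concentrated on $(0, \infty)$, and since $f$ is locally integrable they satisfy~\eqref{eq:loc:int}, so $\mu_\pm((0, N]) < \infty$ for every $N$. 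For $N \ge 1$ I would let $f_N$ be the $\amcm$ extended function with atom $m$ at $0$ and Bernstein measures $\mu_\pm \ind_{(0, N]}$; these measures have all moments finite, $f_N$ is locally integrable and converges to zero at $\pm\infty$, $f_N$ is not identically zero for $N$ large, and $f_N$ increases to $f$ pointwise on $\R \setminus \{0\}$, hence $f_N \to f$ vaguely. By the vague-limit closure property from Section~\ref{sec:bell} it then suffices to show that each $f_N$ is weakly bell-shaped, so from now on one may assume that $\mu_+$ and $\mu_-$ have all moments finite.

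Next I would establish the key identity. Fix $t > 0$ and $n \in \{0, 1, 2, \ldots\}$. Applying Lemma~\ref{lem:cm:rec} to $f_+$, Corollary~\ref{cor:am:rec} to $f_-$, and using that $G_t^{(n)} = H_n G_t$ for a polynomial $H_n$ of degree $n$ whose leading coefficient has sign $(-1)^n$ (the Hermite polynomial, as in the proof of Lemma~\ref{lem:cm:rec}), one obtains
\formula{
 (f * G_t)^{(n)}(x) = \bigl(P_+(x) + P_-(x) + m H_n(x) + (-1)^n u(x) + v(x)\bigr) G_t(x) ,
}
where $\deg P_\pm \le n - 1$, $u \in \am$ and $v \in \cm$. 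Multiplying by $(-1)^n$ this becomes
\formula{
 (-1)^n (f * G_t)^{(n)}(x) = \bigl(R(x) + u(x) + (-1)^n v(x)\bigr) G_t(x) ,
}
with $R = (-1)^n(P_+ + P_- + m H_n)$: a polynomial of degree at most $n$ whose coefficient at $x^n$ equals $m$ times the absolute value of the leading coefficient of $H_n$, hence is non-negative. Thus $R$, $u$, $v$ are exactly as in Lemma~\ref{lem:poly:rec}.

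Finally I would conclude. Since $f \ne 0$, the function $f * G_t$ is smooth and not identically zero, all of its derivatives converge to zero at $\pm\infty$ (by the standard convolution estimate applied to $(f * G_t)^{(n)} = f * G_t^{(n)}$), and no derivative $(f * G_t)^{(n)}$ vanishes identically (otherwise $f * G_t$ would be a polynomial converging to zero at $\pm\infty$, hence zero). By Lemma~\ref{lem:poly:rec}, $R + u + (-1)^n v$ then has at most $n$ real zeroes counting multiplicities, so — $G_t$ being strictly positive — $(f * G_t)^{(n)}$ changes its sign at most $n$ times. Combined with Rolle's theorem, which gives at least $n$ sign changes because $f * G_t$ is a non-zero smooth function all of whose derivatives vanish at $\pm\infty$, this yields exactly $n$ sign changes. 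Hence $f * G_t$ is strictly bell-shaped for every $t > 0$, that is, $f$ is weakly bell-shaped.

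The main obstacle is the middle step: one has to merge the absolutely monotone part (Corollary~\ref{cor:am:rec}, which contributes a $\cm$ function with the ``good'' sign), the completely monotone part (Lemma~\ref{lem:cm:rec}, whose $\am$ contribution carries the factor $(-1)^n$), and the atom at $0$ (whose $n$-th derivative is a degree-$n$ Hermite-type polynomial times $G_t$, with leading coefficient of sign $(-1)^n$) into a single expression $R + u + (-1)^n v$ of the precise shape demanded by Lemma~\ref{lem:poly:rec} — the delicate point being the sign bookkeeping that forces the leading coefficient of $R$ to be non-negative. Everything else, namely the truncation in the first step and the decay of $(f * G_t)^{(n)}$ at $\pm\infty$, is routine.
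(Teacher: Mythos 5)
Your proof is correct and follows essentially the same approach as the paper: reduce to Bernstein measures with all moments finite (the paper does this by approximation at the end, you do it at the start), then combine Lemma~\ref{lem:cm:rec}, Corollary~\ref{cor:am:rec} and the Hermite-polynomial form of $G_t^{(n)}/G_t$ into the shape required by Lemma~\ref{lem:poly:rec}. The sign bookkeeping for $R$ is handled correctly, and your explicit treatment of the ``not identically zero'' case, the decay at $\pm\infty$, and the Rolle lower bound fills in details the paper leaves implicit.
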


\begin{proof}
Recall that $f$ may have a non-negative atom at zero, and denote $m = f(\{0\}) \ge 0$. On $\R \setminus \{0\}$, $f$ is a pure function, completely monotone on $(0, \infty)$ and absolutely monotone on $(-\infty, 0)$. We first assume that the Bernstein measures $\mu_+$, $\mu_-$ of $f$ have all moments finite. In this case, by Lemma~\ref{lem:cm:rec} and Corollary~\ref{cor:am:rec}, for every $t > 0$ and $n = 0, 1, 2, \ldots$ there are functions $u \in \am$ and $v \in \cm$, as well as a polynomial $P$ of~degree at most $n - 1$, such that
\formula{
 (f * G_t)^{(n)}(x) & = (-1)^n \biggl(m \, \frac{(-1)^n G_t^{(n)}(x)}{G_t(x)} + P(x) + u(x) + (-1)^n v(x)\biggr) G_t(x)
}
for all $x \in \R$. (For $n = 0$ we understand that $P$ is constant zero). However, $(-1)^n G_t^{(n)} / G_t$ is a polynomial of degree $n$, with coefficient at $x^n$ positive. Therefore, by Lemma~\ref{lem:poly:rec}, the equation $(f * G_t)^{(n)}(x) = 0$ has at most $n$ solutions (counting multiplicities). As a consequence, $f * G_t$ is strictly bell-shaped. Since $t > 0$ is arbitrary, $f$ is weakly bell-shaped, as desired.

For general Bernstein measures $\mu_+$, $\mu_-$ we proceed by approximation. For $k = 1, 2, \ldots$ we define the extended function $f_k \in \amcm$ so that $f_k(\{0\}) = f(\{0\})$ and on $\R \setminus \{0\}$,
\formula{
 f_k(x) & = \int_{[-k, 0]} e^{s x} \mu_+(d x) &\qquad& \text{for $x < 0$,} \\
 f_k(x) & = \int_{[0, k]} e^{-s x} \mu_+(d x) &\qquad& \text{for $x > 0$.}
}
By the first part of the proof, for every $k = 1, 2, \ldots$ the extended function $f_k$ is weakly bell-shaped, unless it is identically zero. Furthermore, the density functions of $f_k$ converge monotonically to the density function of $f$ as $k \to \infty$, and so $f$ is either identically zero or weakly bell-shaped.
\end{proof}

\begin{example}
For any $p \in (0, 1)$ the functions $f(x) = |x|^{-p}$ and $g(x) = x^{-p} \ind_{(0, \infty)}(x)$ are weakly bell-shaped.
\end{example}

\begin{remark}
\label{rem:amcm:broad}
Let us say that an extended function $f$ is \emph{strictly bell-shaped in the broad sense} if we only require that $f^{(n)}$ converges to zero at $\pm \infty$ and changes its sign exactly $n$ times for $n = 1, 2, \ldots$, but not necessarily for $n = 0$. Similarly, we introduce the notion of a \emph{weakly bell-shaped function in the broad sense}.

By repeating the proof of Theorem~\ref{thm:amcm} it is easy to see that a locally integrable extended function $f$ is weakly bell-shaped in the broad sense if we assume that $f(\{0\}) \ge 0$, $f$ is locally integrable, $-f'$ is completely monotone on $(0, \infty)$ and $f'$ is absolutely monotone on $(-\infty, 0)$. In this case we have
\formula{
 f(x) & = c_- + \int_{(0, \infty)} (e^{s x} - e^{-s}) \mu_-(ds) &\qquad& \text{for $x < 0$,} \\
 f(x) & = c_+ + \int_{(0, \infty)} (e^{-s x} - e^{-s}) \mu_+(ds) &\qquad& \text{for $x > 0$,}
}
where $c_- = f(-1)$ and $c_+ = f(1)$ are arbitrary real constants, and $\mu_-$ and $\mu_+$ are arbitrary non-negative measures concentrated on $(0, \infty)$ such that
\formula{
 \int_{(0, \infty)} \frac{s}{(1 + s)^2} \, \mu_+(ds) & < \infty, & \int_{(0, \infty)} \frac{s}{(1 + s)^2} \, \mu_-(ds) & < \infty .
}
Such a function $f$ need not be positive, and it may converge to $-\infty$ at~$\infty$ or at~$-\infty$. The derivative $f'$ of $f$ is, however, completely monotone on $(0, \infty)$, and $-f'$ is absolutely monotone on $(-\infty, 0)$.
\end{remark}

\begin{example}
For any $p \in (0, 1)$ the functions $f(x) = -|x|^p$ and $g(x) = -x^p \ind_{(0, \infty)}(x)$ are weakly bell-shaped in the broad sense. As can be directly checked, for any $p \in (0, 1/2)$ the function $h(x) = -(1 + x^2)^p$ is strictly bell-shaped in the broad sense. 
\end{example}

%
%

\section{P\'olya frequency functions}
\label{sec:polya}

The class of P\'olya frequency functions is the closure (with respect to vague convergence of measures) of the class of convolutions of exponential distributions. The best way to describe this class involves the Fourier transform (or the characteristic function), which we identify with the restriction of the Laplace transform to the imaginary axis $i \R$. For this reason we re-use the notation for the Laplace transform and denote the Fourier transform of $f$ by $\laplace f(z)$, where we understand that $z \in i \R$.

\begin{definition}
\label{def:polya}
An integrable function $f : \R \to \R$ is a \emph{P\'olya frequency function} if its Fourier transform satisfies
\formula{
 \laplace f(z) & = e^{a z^2 - b z} \prod_{n = 1}^N \frac{e^{z / z_n}}{1 + z / z_n}
}
for all $z \in i \R$; here $a \ge 0$, $b \in \R$, $N \in \{0, 1, 2, \ldots, \infty\}$, $z_n \in \R \setminus \{0\}$, and
\formula{
 \sum_{n = 1}^N \frac{1}{|z_n|^2} < \infty .
}
For simplicity, we abuse the notation and agree that Dirac measures $\delta_b$ (which correspond to $a = 0$ and $N = 0$) are also P\'olya frequency functions, so formally a P\'olya frequency function is an extended function.
\end{definition}

The definition can be equivalently phrased as follows: $f$ is a P\'olya frequency function if and only if $f$ is the convolution of the Gauss--Weierstrass kernel $G_a$ (if $a > 0$), the Dirac measure $\delta_b$ and the (finite or infinite) family of shifted exponential measures
\formula{
 & |z_n| e^{-z_n (x - 1)} \ind_{(0, \infty)}(z_n (x - 1)) dx
}
with mean $0$ and variance $|z_n|^{-2}$.

\begin{definition}
An integrable extended function $f$ is said to be a \emph{variation diminishing convolution kernel} if for all bounded functions $g$ the function $f * g$ changes its sign at most as many times as the function $g$ does.
\end{definition}

We recall the following fundamental result of Schoenberg, proved originally in~\cite{bib:s47,bib:s48}.

\begin{theorem}[Schoenberg; see Chapter~IV in~\cite{bib:hw55}]
\label{thm:polya}
An integrable function is a variation diminishing convolution kernel if and only if it is a P\'olya frequency function, up to multiplication by a constant.\qed
\end{theorem}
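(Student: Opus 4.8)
The plan is to build both implications on the structural description recalled after Definition~\ref{def:polya}: a P\'olya frequency function is a vague limit of convolutions of a Gaussian kernel $G_a$, a Dirac measure $\delta_b$ and finitely many shifted one-sided exponential densities, while conversely $f$ is a P\'olya frequency function precisely when $\xi \mapsto 1/\laplace f(i\xi)$ is the restriction to $i\R$ of an entire function in the Laguerre--P\'olya class, namely of the form $e^{-a z^2 + b z}\prod_{n=1}^N (1 + z/z_n)\,e^{-z/z_n}$ with $a \ge 0$, $b \in \R$, $z_n \in \R \setminus \{0\}$ and $\sum |z_n|^{-2} < \infty$.

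For the direction that a P\'olya frequency function $f$ is a variation diminishing convolution kernel, I would verify the property for each building block and then propagate it through convolutions and limits. Translation by $\delta_b$ leaves the number of sign changes of any function unchanged. For a one-sided exponential $e_\lambda(x) = \lambda e^{-\lambda x}\ind_{(0,\infty)}(x)$ with $\lambda > 0$, the function $h = e_\lambda * g$ satisfies $h' = \lambda(g - h)$ and $h(x) \to 0$ as $x \to -\infty$ for bounded $g$; since at every sign change $y$ of $h$ one has $\sign h'(y) = \sign g(y)$ while the values of $h$ at consecutive sign changes alternate, a short Rolle-type count shows that $g$ has at least as many sign changes as $h$. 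The reflected exponential is symmetric, and $G_a$, being itself a P\'olya frequency function and a vague limit of convolutions of symmetric pairs of exponentials, is covered by the limiting step. A composition of two variation diminishing kernels is again one, so every finite convolution of building blocks is; and if $f_k \to f$ along the natural approximating sequences with each $f_k$ variation diminishing, then $f_k * g \to f * g$ pointwise for bounded continuous $g$, and a pointwise limit of functions with at most $n$ sign changes has at most $n$ sign changes, so $f$ is variation diminishing.

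For the converse, suppose $f$ is an integrable variation diminishing convolution kernel. Testing against a narrow approximate identity $g$, the function $f * g$ is close to $f$ and hence has at least as many sign changes as $f$, while $g$ has none; thus $f$ never changes sign, and after multiplying by a constant we may take $f \ge 0$ with $\int f = 1$. The heart of the argument is now to recover the analytic form of $\laplace f(i\xi)$, which I would split into two stages. First, deduce from the variation diminishing property that the translation kernel $(x,y) \mapsto f(x-y)$ is totally positive of every order, i.e.\ $\det[f(x_i - y_j)]_{i,j=1}^n \ge 0$ whenever $x_1 < \dots < x_n$ and $y_1 < \dots < y_n$: given a candidate minor, one constructs a piecewise-constant test function $g$ whose column sign pattern would force $f * g$ to change sign more often than $g$ if that minor were negative. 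Second, extract the analytic form: the total positivity conditions force $\laplace f$ to be zero-free on a maximal open vertical strip containing $i\R$ and force $1/\laplace f$ to extend to an entire function of order at most two (a growth estimate is needed here); then Hadamard's factorisation together with the positivity of higher-order minors forces $1/\laplace f(z) = e^{-a z^2 + b z}\prod_{n=1}^N (1 + z/z_n)\,e^{-z/z_n}$ with $a \ge 0$, $b \in \R$ and real $z_n$. Taking reciprocals, $\laplace f(i\xi)$ has exactly the form in Definition~\ref{def:polya}, so $f$ is a P\'olya frequency function; the multiplicative constant is the one removed in the normalisation step.

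I expect the main obstacle to be the second stage of the converse: the passage from the infinitely many determinantal inequalities to the Laguerre--P\'olya normal form for $1/\laplace f$. The delicate points are controlling the strip of convergence of $\laplace f$ and the growth of its reciprocal so that Hadamard's theorem produces the factorisation into a quadratic exponential and a canonical product of genus one, and then excluding non-real zeros using total positivity of \emph{all} orders rather than low order alone. The detailed execution of these steps is carried out in Chapter~IV of~\cite{bib:hw55}.
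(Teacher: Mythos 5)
Your proposal matches the paper's treatment: the paper likewise only sketches the direct implication (building blocks, closure under convolution and $L^1$ limits, with the one-sided exponential handled via the identity $e^{\lambda x}(e_\lambda * g)(x) = \int_{-\infty}^x \lambda e^{\lambda y} g(y)\,dy$, whose right-hand side is a running integral and hence changes sign no more often than $g$), and it defers the converse, as a deep result involving total positivity, to Chapter~IV of \cite{bib:hw55}, exactly as you do. One small slip in your exponential step: for bounded $g$ it is the weighted function $e^{\lambda x}(e_\lambda * g)(x)$, not $e_\lambda * g$ itself, that tends to $0$ as $x \to -\infty$, and it is this weighted antiderivative form that makes the sign-change count close without an off-by-one.
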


We remark that we will only need the direct part of the above theorem, that is, the fact that P\'olya frequency functions are variation diminishing convolution kernels. The proof of this statement is relatively simple, and we sketch it for completeness. If $f(x) = e^{-x} \ind_{(0, \infty)}(x)$, then $e^x (f * g)(x) = \int_{-\infty}^x e^y f(y) dy$, which implies that $f * g$ changes its sign at most as many times as $g$. Therefore, $f$ is a variation diminishing convolution kernel. It follows that $f_{a,b}(x) = a f(a x + b)$ is a variation diminishing convolution kernel whenever $a, b \in \R$, $a \ne 0$. The class of non-negative variation diminishing convolution kernels with $L^1(\R)$ norm equal to $1$ is clearly closed in $L^1$ and closed under convolutions. It remains to observe that every P\'olya frequency function can be approximated in $L^1(\R)$ by convolutions of~$f_{a,b}$.

On the other hand, the converse part of Theorem~\ref{thm:polya} is a deep result that involves the concept of total positivity.

Since the Gauss--Weierstrass kernel is strictly bell-shaped, Schoenberg's theorem asserts that P\'olya frequency functions are weakly bell-shaped. It also implies the following properties of bell-shaped functions, which were already mentioned in Section~\ref{sec:bell}.

\begin{corollary}
If $f$ is a strictly bell-shaped function, then it is also a weakly bell-shaped function. A non-negative extended function $f$ is weakly bell-shaped if and only if $f * G_t$ is well-defined for every $t > 0$ and strictly bell-shaped for some sequence of $t > 0$ that converges to $0$.
\end{corollary}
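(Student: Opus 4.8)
The plan is to isolate one key fact --- \emph{if $g$ is strictly bell-shaped, then $g * G_s$ is strictly bell-shaped for every $s > 0$} --- and to deduce both assertions of the corollary from it, together with the semigroup identity $G_t = G_s * G_{t-s}$ for $0 < s < t$.

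To prove the key fact, fix a strictly bell-shaped $g$ and an $s > 0$. First I would note that $G_s$ is a P\'olya frequency function (it is the case $a = s$, $b = 0$, $N = 0$ of Definition~\ref{def:polya}), hence by Theorem~\ref{thm:polya} it is a variation diminishing convolution kernel. Since each $g^{(n)}$ is continuous and converges to $0$ at $\pm\infty$, it is bounded, so $g * G_s$ is a well-defined smooth function, differentiation may be carried out under the integral sign, and an integration by parts gives $(g * G_s)^{(n)} = g^{(n)} * G_s$ (the boundary terms vanish by the decay of $G_s$ and its derivatives); the same boundedness together with dominated convergence shows that $(g * G_s)^{(n)}$ converges to $0$ at $\pm\infty$. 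Now $g^{(n)}$ changes its sign exactly $n$ times, so by the variation diminishing property $g^{(n)} * G_s$ changes its sign at most $n$ times, while by Rolle's theorem --- applicable since $g * G_s$ is smooth and vanishes at $\pm\infty$ --- the derivative $(g * G_s)^{(n)}$ changes its sign at least $n$ times. Hence it changes its sign exactly $n$ times for every $n$, i.e.\ $g * G_s$ is strictly bell-shaped.

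The first assertion of the corollary is now immediate: if $f$ is strictly bell-shaped, then $f * G_t$ is strictly bell-shaped for every $t > 0$, which is precisely the definition of being weakly bell-shaped. For the equivalence, one implication holds by definition (a weakly bell-shaped $f$ has $f * G_t$ well-defined and strictly bell-shaped for every $t > 0$, in particular for $t = 1/k$). For the converse, let $f$ be a non-negative extended function such that $f * G_t$ is well-defined for every $t > 0$ and $f * G_{t_k}$ is strictly bell-shaped for some sequence $t_k \to 0^+$. Fix $t > 0$ and choose $k$ with $t_k < t$; by the semigroup property of the Gauss--Weierstrass kernel and Tonelli's theorem (legitimate because $f \ge 0$ and $f * G_t$ is finite) we have $f * G_t = (f * G_{t_k}) * G_{t - t_k}$, and applying the key fact to $g = f * G_{t_k}$ and $s = t - t_k > 0$ shows that this function is strictly bell-shaped. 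As $t > 0$ was arbitrary, $f$ is weakly bell-shaped.

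The only genuine input here is Schoenberg's theorem (in fact only its elementary direct half, already sketched above); everything else is routine. The mildly delicate points are the interchange of differentiation with convolution and the decay of $(g * G_s)^{(n)}$ at $\pm\infty$ in the key fact, and, in the converse of the equivalence, the use of the non-negativity of $f$ to justify Tonelli's theorem together with the observation that --- because the sequence converges to $0$ --- for every prescribed $t > 0$ some $t_k$ lies below it.
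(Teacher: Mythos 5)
Your proposal is correct and follows essentially the same route as the paper: reduce everything to the fact that $G_s$ is a Pólya frequency function (hence variation diminishing), use $(g*G_s)^{(n)} = g^{(n)} * G_s$ together with Rolle to pin down the number of sign changes, and handle the converse via the semigroup identity $f*G_t = (f*G_{t_k}) * G_{t-t_k}$ for $t_k < t$. You supply somewhat more detail than the paper's very terse argument (in particular, spelling out why the derivatives decay at $\pm\infty$ and why differentiation commutes with the convolution), but the underlying idea and structure are the same.
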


\begin{proof}
For every $t > 0$, the Gauss--Weierstrass kernel $G_t$ is a P\'olya frequency function, and therefore it is a variation diminishing convolution kernel. Therefore, if $f$ is strictly bell-shaped, so is $f * G_t$ for every $t > 0$. Similarly, if $f$ is a non-negative extended function such that $f * G_t$ is strictly bell-shaped for some $t > 0$ and $f * G_s$ is well-defined for some $s > t$, then $f * G_s = (f * G_t) * G_{s - t}$ is strictly bell-shaped.
\end{proof}

%
%

\section{Synthesis}
\label{sec:main}

Clearly, the convolution of a bell-shaped function with a variation diminishing convolution kernel is again a bell-shaped function. In this section we describe the class of convolutions of $\amcm$ functions (which we already know to be bell-shaped) and P\'olya frequency functions (which are precisely variation diminishing convolution kernels) in terms of the Fourier transform. Recall that we identify the Fourier transform of $f$ with the restriction of the Laplace transform $\laplace f$ to the imaginary axis $i \R$.

In most results, we do not assume that $f$ is an integrable function. Instead, we suppose that $f$ is a locally integrable extended function, which converges to zero at $\pm \infty$ and which is monotone near $-\infty$ and near $\infty$. In this case the Fourier transform $\laplace f(z)$ is well-defined as an improper integral for $z \in i \R \setminus \{0\}$, and it uniquely determines the function $f$ among the class of extended functions just described.

By Corollary~\ref{cor:amcm:reg}, if $f \in \amcm$, $f$ is locally integrable and $f$ converges to zero at $\pm \infty$, then
\formula{
 \laplace f(z) & = m + \int_{(0, \infty)} \frac{1}{z + s} \, \mu_+(ds) - \int_{(0, \infty)} \frac{1}{z - s} \, \mu_-(ds)
}
for all $z \in i \R \setminus \{0\}$, where $m = f(\{0\})$, $\mu_+$ and $\mu_-$ are Bernstein measures of $f$, and $\mu_+, \mu_-$ satisfy the integrability condition~\eqref{eq:loc:int}. Furthermore, $f$ with the above properties is uniquely determined by the values of $\laplace f(z)$ for $z \in i \R \setminus \{0\}$. The next result provides an alternative expression for the Fourier transform of $f$. It is a variant of the equivalence of Stieltjes and exponential representations of Nevanlinna--Pick functions, studied in detail in~\cite{bib:ad56}. For completeness, we provide a proof.

\begin{proposition}
\label{prop:amcm:laplace}
The following conditions are equivalent:
\begin{enumerate}[label=\textnormal{(\alph*)}]
\item
\label{prop:amcm:laplace:a}
$F$ is the Laplace transform of a locally integrable $\amcm$ extended function which converges to zero at $\pm\infty$\textup{;} that is,
\formula[eq:amcm:stieltjes]{
 F(z) & = m + \int_{(0, \infty)} \frac{1}{z + s} \, \mu_+(ds) - \int_{(0, \infty)} \frac{1}{z - s} \, \mu_-(ds) ,
}
for all $z \in i \R \setminus \{0\}$, where $m \ge 0$ and $\mu_+$ and $\mu_-$ are non-negative measures satisfying the integrability condition
\formula[eq:amcm:int]{
 \int_{(0, \infty)} \frac{1}{1 + s} \, \mu_+(ds) & < \infty &\qquad& \text{and} &\qquad \int_{(0, \infty)} \frac{1}{1 + s} \, \mu_-(ds) & < \infty ;
}
\item
\label{prop:amcm:laplace:b}
either $F$ is constant zero on $i \R \setminus \{0\}$ or for all $z \in i \R \setminus \{0\}$ we have
\formula[eq:amcm:exp]{
 F(z) & = \exp\biggl(c + \int_{-\infty}^\infty \biggl( \frac{1}{z + s} - \frac{1}{s} \ind_{\R \setminus (-1, 1)}(s) \biggr) \ph(s) ds \biggr) ,
}
where $c \in \R$, $\ph$ takes values in $[0, 1]$ on $(0, \infty)$ and in $[-1, 0]$ on $(-\infty, 0)$, and
\formula[eq:amcm:extra]{
 \int_{-1}^1 \re F(i t) dt & < \infty &\qquad& \text{and} &\qquad \lim_{t \to 0^+} (t \im F(i t)) & = 0.
}
\end{enumerate}
\end{proposition}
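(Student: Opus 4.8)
The content of the statement is the equivalence of~\eqref{eq:amcm:stieltjes} (together with~\eqref{eq:amcm:int}) and~\eqref{eq:amcm:exp} (together with~\eqref{eq:amcm:extra}): that $F$ is the Laplace transform of an $\amcm$ extended function is, by Corollary~\ref{cor:amcm:reg} and the remarks preceding it, a reformulation of~\eqref{eq:amcm:stieltjes}. The object that ties the two together is $g(z) := z F(z)$. Under either condition, $g$ extends from $i\R\setminus\{0\}$ to a real-symmetric holomorphic function on $\C\setminus\R$ which is a Nevanlinna--Pick function ($\im g(z)\ge 0$ for $\im z>0$), and, after dividing out $z$, the two representations of $F$ are precisely the Herglotz (Stieltjes-type) and the exponential representation of $g$, whose interchangeability is the subject of~\cite{bib:ad56}. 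The plan is therefore to establish the Nevanlinna--Pick property from each side and to translate. The trivial cases come first: $F\equiv 0$ satisfies both conditions, and a positive constant $F=m$ satisfies~\eqref{eq:amcm:stieltjes} with $\mu_\pm=0$ and~\eqref{eq:amcm:exp} with $\ph=0$, $c=\log m$; assume $F$ is neither, so $g$ is non-constant.

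For~\ref{prop:amcm:laplace:a}$\,\Rightarrow\,$\ref{prop:amcm:laplace:b}: the Stieltjes integrals in~\eqref{eq:amcm:stieltjes} extend $F$ to $\C\setminus\R$, and $g(z)=mz+\int_{(0,\infty)}\frac{z}{z+s}\,\mu_+(ds)+\int_{(0,\infty)}\frac{z}{s-z}\,\mu_-(ds)$, where $z\mapsto mz$, $z\mapsto z/(z+s)$ and $z\mapsto z/(s-z)$ ($s>0$) each send $\{\im z>0\}$ into itself and the integrals converge locally uniformly by~\eqref{eq:amcm:int}; hence $g$ is Nevanlinna--Pick. Being non-constant, $g$ has $\im g>0$ throughout $\{\im z>0\}$, so $g$ (and $F$) omits $0$ there and $\log g$ is a Nevanlinna--Pick function with $\im\log g\in(0,\pi)$. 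Boundedness of $\im\log g$ forces its Herglotz measure to be absolutely continuous with a density $\xi$ valued in $[0,1]$ --- this is the exponential representation of~\cite{bib:ad56}, or follows from the Poisson representation of bounded harmonic functions on a half-plane --- so $\log g(z)=\beta+\int_\R\bigl(\frac{1}{t-z}-\frac{t}{1+t^2}\bigr)\xi(t)\,dt$. Subtracting the identity $\log z=\int_{-\infty}^0\bigl(\frac{1}{t-z}-\frac{t}{1+t^2}\bigr)\,dt$ (valid for $\im z>0$ with no additive constant), putting $t=-s$, and replacing $\frac{t}{1+t^2}$ by $\frac{1}{s}\ind_{\R\setminus(-1,1)}(s)$ (the difference is $O(|s|^{-3})$ at infinity and bounded near $0$, hence absolutely integrable against the bounded density and absorbed into the constant), one arrives at~\eqref{eq:amcm:exp} with $\ph(s)=\ind_{(0,\infty)}(s)-\xi(-s)$, which lies in $[0,1]$ on $(0,\infty)$ and in $[-1,0]$ on $(-\infty,0)$. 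Finally~\eqref{eq:amcm:extra} holds automatically: from~\eqref{eq:amcm:stieltjes} one gets $\re F(it)=m+\int_{(0,\infty)}\frac{s}{s^2+t^2}\,(\mu_++\mu_-)(ds)$, whose integral over $(-1,1)$ equals $2m+2\int_{(0,\infty)}\arctan(1/s)\,(\mu_++\mu_-)(ds)<\infty$ by~\eqref{eq:amcm:int}, while $t\im F(it)=\int_{(0,\infty)}\frac{t^2}{s^2+t^2}\,(\mu_--\mu_+)(ds)\to 0$ as $t\to 0^+$ by dominated convergence.

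For~\ref{prop:amcm:laplace:b}$\,\Rightarrow\,$\ref{prop:amcm:laplace:a}: the integral in~\eqref{eq:amcm:exp} defines a real-symmetric holomorphic function on $\C\setminus\R$, so $F=e^{\log F}$ is one and never vanishes. For $z$ with $\arg z=\theta\in(0,\pi)$ one has $\int_0^\infty|z+s|^{-2}\,ds=\theta/\im z$ and $\int_{-\infty}^0|z+s|^{-2}\,ds=(\pi-\theta)/\im z$; since $\ph\ge0$ on $(0,\infty)$, $\ph\le0$ on $(-\infty,0)$ and $|\ph|\le1$, this yields $\im\log F(z)\in[-\theta,\pi-\theta]$, hence $\im\log(zF(z))\in[0,\pi]$: $g=zF$ is a real-symmetric Nevanlinna--Pick function. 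Writing its Herglotz representation $g(z)=\alpha z+\beta+\int_\R\bigl(\frac{1}{t-z}-\frac{t}{1+t^2}\bigr)\rho(dt)$ with $\alpha\ge0$ and dividing by $z$ (an elementary partial-fraction manipulation) gives
\[
 F(z)=\alpha-\frac{\rho(\{0\})}{z^2}+\frac{1}{z}\Bigl(\beta+\int_{\R\setminus\{0\}}\frac{\rho(dt)}{t(1+t^2)}\Bigr)+\int_{\R\setminus\{0\}}\frac{\rho(dt)}{t(t-z)} .
\]
Evaluating at $z=it$, the requirement $\int_{-1}^1\re F(it)\,dt<\infty$ forces $\rho(\{0\})=0$ and $\int_{\R\setminus\{0\}}|t|^{-1}(1+|t|)^{-1}\,\rho(dt)<\infty$, and then $\lim_{t\to0^+}(t\im F(it))=-\bigl(\beta+\int_{\R\setminus\{0\}}\frac{\rho(dt)}{t(1+t^2)}\bigr)$, so the second part of~\eqref{eq:amcm:extra} makes the $z^{-1}$-coefficient vanish. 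With these simplifications $F(z)=\alpha+\int_{(0,\infty)}\frac{\rho(dt)}{t(t-z)}+\int_{(-\infty,0)}\frac{\rho(dt)}{t(t-z)}$; substituting $t=-s$ in the last integral recovers~\eqref{eq:amcm:stieltjes} with $m=\alpha\ge0$, $\mu_-(ds)=s^{-1}\rho|_{(0,\infty)}(ds)$, and $\mu_+$ equal to $s^{-1}$ times the image of $\rho|_{(-\infty,0)}$ under $t\mapsto-t$; both satisfy~\eqref{eq:amcm:int}.

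Once this scaffolding is in place the argument is short, and the delicate point is the role of~\eqref{eq:amcm:extra}: one must check that these two conditions are \emph{exactly} what excludes a $z^{-2}$-term (an atom at $0$ in the Herglotz measure of $zF$) and a $z^{-1}$-term (an atom at $0$ in $\mu_\pm$) and what pins down the integrability~\eqref{eq:amcm:int}, and, conversely, that they hold automatically for every $F$ of the shape~\eqref{eq:amcm:stieltjes}. The one non-elementary ingredient is the classical fact --- the exponential representation of~\cite{bib:ad56} --- that a Nevanlinna--Pick function with imaginary part at most $\pi$ has an absolutely continuous Herglotz measure of density at most $1$.
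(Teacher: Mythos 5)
Your proof follows essentially the same route as the paper's: both directions hinge on $zF(z)$ being a Nevanlinna--Pick function, on the Poisson/Herglotz representation of its bounded argument (the Aronszajn--Donoghue exponential representation), and on using~\eqref{eq:amcm:extra} to exclude an atom of the Herglotz measure at $0$ and a residual $1/z$ term. The only caveat is that in~\ref{prop:amcm:laplace:b}$\Rightarrow$\ref{prop:amcm:laplace:a} you split the Herglotz integral into the pieces $\int\rho(dt)/(t(t-z))$ and $z^{-1}\int\rho(dt)/(t(1+t^2))$ before knowing that $\int_{|t|<1}|t|^{-1}\rho(dt)<\infty$; as in the paper, one should first extract that integrability (and $\rho(\{0\})=0$) from the non-negative quantity $\int_0^1 t^{-1}\im(itF(it))\,dt=\int_0^1\re F(it)\,dt$ via Tonelli, and only then perform the decomposition.
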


\begin{proof}
Both expressions~\eqref{eq:amcm:stieltjes} and~\eqref{eq:amcm:exp} define a holomorphic function $F$ in $\C \setminus \R$. Indeed, the integrals in~\eqref{eq:amcm:stieltjes} and~\eqref{eq:amcm:exp} are absolutely convergent when $z \in \C \setminus \R$ by the assumptions on $\mu$ and $\ph$, and by a standard application of Morera's theorem, they define holomorphic functions. Furthermore, both expressions define a function $F$ satisfying $F(\overline{z}) = \overline{F(z)}$, so we may restrict our attention to the upper complex half-plane $\im z > 0$.

Formula~\eqref{eq:amcm:stieltjes} can be rewritten as
\formula[eq:amcmz:stieltjes]{
\begin{aligned}
 z F(z) & = m z + \int_{(0, \infty)} \biggl(1 - \frac{s}{z + s}\biggr) \mu_+(ds) - \int_{(0, \infty)} \biggl(1 + \frac{s}{z - s}\biggr) \mu_-(ds) \\
 & = m z + \int_{(0, \infty)} \biggl(\frac{-1}{z + s} + \frac{1}{s}\biggr) s \mu_+(ds) + \int_{(0, \infty)} \biggl(\frac{-1}{z - s} + \frac{1}{s} \biggr) s \mu_-(ds)
\end{aligned}
}
when $\im z > 0$. Similarly, using the identity
\formula{
 \log z & = \int_0^\infty \biggl(\frac{-1}{z + s} - \frac{1}{s} \ind_{(1, \infty)}(s)\biggr) ds ,
}
valid for $z \in \C \setminus (-\infty, 0]$, we find that~\eqref{eq:amcm:exp} is equivalent to
\formula[eq:amcmz:exp]{
 z F(z) & = \exp \biggl(c + \int_{-\infty}^\infty \biggl(\frac{-1}{z + s} + \frac{1}{s} \ind_{\R \setminus (-1, 1)}(s)\biggr)(\ind_{(0, \infty)}(s) - \ph(s)) ds\biggr)
}
when $\im z > 0$. Recall that $\pi^{-1} \im (-1 / (z - s)) = \pi^{-1} |z - s|^{-2} \im z$ is the Poisson kernel for the upper complex half-plane.

We first prove that condition~\ref{prop:amcm:laplace:a} implies~\ref{prop:amcm:laplace:b}. Suppose that $F$ is not constant zero, that is, $m > 0$ or at least one of the measures $\mu_+, \mu_-$ is non-zero. By~\eqref{eq:amcmz:stieltjes},
\formula[eq:amcmzim]{
 \im (z F(z)) & = m \im z + \int_{(0, \infty)} \frac{s \im z}{|z + s|^2} \, \mu_+(ds) + \int_{(0, \infty)} \frac{s \im z}{|z - s|^2} \, \mu_-(ds) > 0 ,
}
so that $z F(z)$ is a non-zero Nevanlinna--Pick function. It follows that $\arg(z F(z)) \in (0, \pi)$ when $\im z > 0$. Thus, $\arg(z F(z))$ is a bounded harmonic function in the upper complex half-plane. By the Poisson representation theorem, $\arg(z F(z))$ is the Poisson integral of the corresponding boundary values: if we denote
\formula{
 \Phi(s) & = \lim_{t \to 0^+} \arg ((s + i t) F(s + i t)) ,
}
then $\Phi(s)$ is well-defined for almost all $s \in \R$, $\Phi(s) \in [0, \pi]$ and
\formula{
 \arg (z F(z)) & = \frac{1}{\pi} \int_{-\infty}^\infty \im \frac{-1}{z - s} \, \Phi(s) ds = \frac{1}{\pi} \int_{-\infty}^\infty \im \frac{-1}{z + s} \, \Phi(-s) ds
}
when $\im z > 0$. Since $\arg F(z) = \im \log F(z)$, we have
\formula{
 \im \log (z F(z)) & = \frac{1}{\pi} \, \im \biggl(\int_{-\infty}^\infty \biggl( \frac{-1}{z + s} + \frac{1}{s} \ind_{\R \setminus (-1, 1)}(s) \biggr) \Phi(-s) ds\biggr) ;
}
the term $s^{-1} \ind_{\R \setminus (-1, 1)}(s)$ has zero imaginary part, and it is needed to make the integral convergent. Since two holomorphic functions with equal imaginary parts differ by a real constant, formula~\eqref{eq:amcmz:exp} follows, with $\ph(s) = \ind_{(0, \infty)}(s) - \pi^{-1} \Phi(-s)$. Clearly, $\ph(s) \in [0, 1]$ for $s > 0$ and $\ph(s) \in [-1, 0]$ for $s < 0$, as desired. By~\eqref{eq:amcm:stieltjes} (or~\eqref{eq:amcmzim}), we have
\formula{
 \int_0^1 \re F(i t) dt & = \int_0^1 \biggl( m + \int_{(0, \infty)} \frac{s}{t^2 + s^2} \, (\mu_+ + \mu_-)(ds) \biggr) dt .
}
Applying Fubini's theorem and using the inequality $t^2 + s^2 \ge \tfrac{1}{2} (t + s)^2$, we find that
\formula{
 \int_0^1 \re F(i t) dt & \le m + \int_{(0, \infty)} \biggl(\int_0^1 \frac{2 s}{(t + s)^2} \, dt\biggr) (\mu_+ + \mu_-)(ds) \\
 & = m + \int_{(0, \infty)} \frac{2}{1 + s} \, (\mu_+ + \mu_-)(ds) .
}
Since $\mu_+$ and $\mu_-$ satisfy the integrability condition~\eqref{eq:amcm:int}, the right-hand side is finite, and the first part~\eqref{eq:amcm:extra} follows. Finally, also the second part of~\eqref{eq:amcm:extra} is a consequence of~\eqref{eq:amcm:stieltjes}: we have
\formula[eq:amcm:lim]{
 \lim_{t \to 0^+} (t \im F(i t)) & = \lim_{t \to 0^+} \int_{(0, \infty)} \frac{t^2}{t^2 + s^2} \, \mu_+(ds) - \lim_{t \to 0^+} \int_{(0, \infty)} \frac{t^2}{t^2 + s^2} \, \mu_-(ds) = 0
}
by the dominated convergence theorem.

We now argue that condition~\ref{prop:amcm:laplace:b} implies~\ref{prop:amcm:laplace:a}. By~\eqref{eq:amcmz:exp}, we have
\formula{
 \arg(z F(z)) & = \int_{-\infty}^\infty \im \frac{-1}{z + s} \, (\ind_{(0, \infty)}(s) - \ph(s)) ds
}
when $\im z > 0$; that is, $\arg(z F(z))$ is the Poisson integral of $\pi (\ind_{(0, \infty)}(s) - \ph(s)) \in [0, \pi]$. It follows that $\arg(z F(z)) \in [0, \pi]$ when $\im z > 0$. Equivalently, $\im(z F(z)) \ge 0$, and hence $z F(z)$ is again a Nevanlinna--Pick function.

By Herglotz's theorem, the non-negative harmonic function $\im(z F(z))$ in the upper complex half-plane is the Poisson integral of a non-negative measure, that is, when $\im z > 0$, we have
\formula{
 \im(z F(z)) & = m \im z + \frac{1}{\pi} \int_{\R} \im \frac{-1}{z - s} \, \sigma(ds)
}
for some $m \ge 0$ and some non-negative measure $\sigma$ on $\R$ such that $\int_{\R} (1 + s)^{-2} \sigma(ds)$ is finite. Furthermore, by Fubini's theorem,
\formula{
 \int_0^1 \re F(i t) dt & = \int_0^1 \frac{\im(i t F(i t))}{t} \, dt \\
 & = \int_0^1 \biggl(m + \frac{1}{\pi t} \int_{\R} \im \frac{-1}{i t - s} \, \sigma(ds) \biggr) dt \\
 & = m + \frac{1}{\pi} \int_{\R} \int_0^1 \frac{1}{t^2 + s^2} \, dt \sigma(ds) \\
 & \ge \frac{1}{\pi} \int_{\R} \int_0^1 \frac{1}{(t + |s|)^2} \, dt \sigma(ds) \\
 & = \frac{1}{\pi} \int_{\R} \frac{1}{|s| (1 + |s|)} \, \sigma(ds) ,
}
where we understand that the integrand in the right-hand side is infinite at $s = 0$. The left-hand side is finite by the first part of~\eqref{eq:amcm:extra}. Therefore, $\sigma(\{0\}) = 0$, and if we define $\mu_+(ds) = \pi^{-1} s^{-1} \ind_{(0, \infty)}(s) \check{\sigma}(ds)$ and $\mu_-(ds) = \pi^{-1} s^{-1} \ind_{(0, \infty)}(s) \sigma(ds)$ (where $\check{\sigma}(A) = \sigma(-A)$), then $\mu_+$ and $\mu_-$ satisfy the integrability condition~\eqref{eq:amcm:int}. Furthermore,
\formula{
 \im(z F(z)) & = m \im z + \int_{(0, \infty)} \im \frac{-1}{z + s} \, s \mu_+(ds) + \int_{(0, \infty)} \im \frac{-1}{z - s} \, s \mu_-(ds) \\
 & = \im \biggl(m z + \int_{(0, \infty)} \biggl(\frac{-1}{z + s} + \frac{1}{s}\biggr) s \mu_+(ds) + \int_{(0, \infty)} \biggl(\frac{-1}{z - s} - \frac{1}{s}\biggr) s \mu_-(ds)\biggr) ;
}
the term $1 / s$ has zero imaginary part, and it makes the integrals convergent. Since two holomorphic functions with equal imaginary parts differ by a real constant, formula~\eqref{eq:amcmz:stieltjes} holds up to addition by a real constant $b$. It follows that~\eqref{eq:amcm:stieltjes} holds up to addition by $b / z$, that is,
\formula{
 F(z) & = \frac{b}{z} + m + \int_{(0, \infty)} \frac{1}{z + s} \, \mu_+(ds) - \int_{(0, \infty)} \frac{1}{z - s} \, \mu_-(ds) .
}
Finally, as in~\eqref{eq:amcm:lim}, we find that
\formula{
 \lim_{t \to 0^+} (t \im F(i t)) & = b ,
}
and so $b = 0$ by the second part of~\eqref{eq:amcm:extra}. We conclude that~\eqref{eq:amcm:stieltjes} holds, and the proof is complete.
\end{proof}

\begin{remark}
If $z F(z)$ is a Nevanlinna--Pick function, then $1 / F(z)$ is the characteristic (Laplace) exponent of a Lévy process $X_t$ with completely monotone jumps; we refer to~\cite{bib:k19,bib:r83} for details. Furthermore, $X_t$ is transient if and only if the first part of~\eqref{eq:amcm:extra} holds, and in this case $F(z)$ is the Laplace transform of the potential kernel of $X_t$; see, for example, \cite{bib:s99}. Finally, the second part of~\eqref{eq:amcm:extra} asserts that the potential kernel converges to zero at $\pm \infty$.
\end{remark}

The next result is merely a reformulation of the definition of a P\'olya frequency function.

\begin{proposition}
\label{prop:polya:laplace}
The following conditions are equivalent:
\begin{enumerate}[label=\textnormal{(\alph*)}]
\item $F(z)$ is the Laplace transform of a P\'olya frequency function, that is
\formula[eq:polya]{
 F(z) & = e^{a z^2 - b z} \prod_{n = 1}^N \frac{e^{z / z_n}}{1 + z / z_n}
}
for all $z \in i \R$, where, as in Definition~\ref{def:polya}, $a \ge 0$, $b \in \R$ and
\formula{
 \sum_{n = 1}^N \frac{1}{|z_n|^2} < \infty;
}
\item for all $z \in i \R$ we have
\formula[eq:polya:exp]{
 F(z) & = \exp\biggl(a z^2 - b z + \int_{-\infty}^\infty \biggl( \frac{1}{z + s} - \frac{1}{s} + \frac{z}{s^2} \biggr) \ph(s) ds \biggr) ,
}
where $a \ge 0$, $b \in \R$ and $\ph : \R \to \Z$ is a non-decreasing integer-valued function such that
\formula{
 \int_{-\infty}^\infty \frac{|\ph(s)|}{|s|^3} \, ds < \infty .
}
\end{enumerate}
\end{proposition}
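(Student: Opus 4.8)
The plan is to reduce the statement to a one-factor identity expressing the logarithm of $e^{z/w}/(1+z/w)$ as an integral of the kernel $\tfrac1{z+s}-\tfrac1s+\tfrac z{s^2}$, and then to sum over the zeros $z_n$. For $w\in\R\setminus\{0\}$ put $\psi_w=\ind_{(w,\infty)}$ if $w>0$ and $\psi_w=-\ind_{(-\infty,w)}$ if $w<0$. The key claim is that
\[
 \log\frac{e^{z/w}}{1+z/w}=\int_{-\infty}^\infty\Bigl(\frac1{z+s}-\frac1s+\frac z{s^2}\Bigr)\psi_w(s)\,ds
\]
for $z$ in the slit plane $\C\setminus(-\infty,-w]$ (if $w>0$) or $\C\setminus[-w,\infty)$ (if $w<0$), with the branch of the logarithm chosen so that both sides vanish at $z=0$. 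To prove it, one observes that both sides are holomorphic on the indicated simply connected domain, both vanish at $z=0$, and both have the same derivative: differentiating the right-hand side gives, for instance when $w>0$, $\int_w^\infty\bigl(s^{-2}-(z+s)^{-2}\bigr)\,ds=w^{-1}-(z+w)^{-1}=\tfrac{d}{dz}\log\bigl(e^{z/w}/(1+z/w)\bigr)$, and the computation for $w<0$ is analogous. (Alternatively, one substitutes the representation $\log\zeta=\int_0^\infty(-(\zeta+s)^{-1}-s^{-1}\ind_{(1,\infty)}(s))\,ds$ used in the proof of Proposition~\ref{prop:amcm:laplace} into $\log\bigl(e^{z/w}/(1+z/w)\bigr)=z/w-\log(1+z/w)$; the role of the term $z/s^2$ is precisely that $\int_w^\infty z s^{-2}\,ds=z/w$, which cancels the $z/w$ coming from $e^{z/w}$.) I also record that $\tfrac1{z+s}-\tfrac1s+\tfrac z{s^2}=\tfrac{z^2}{s^2(z+s)}$ and $\int_{-\infty}^\infty|\psi_w(s)|\,|s|^{-3}\,ds=\tfrac1{2w^2}$.

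To deduce condition~(b) from condition~(a): the hypothesis $\sum_n|z_n|^{-2}<\infty$ forces $|z_n|\to\infty$, so $\ph:=\sum_n\psi_{z_n}$ is at each $s\neq0$ a finite sum of integers; it is non-decreasing and integer-valued, it equals $\#\{n:0<z_n<s\}\ge0$ for $s>0$ and $-\#\{n:s<z_n<0\}\le0$ for $s<0$, and it satisfies $\int_{-\infty}^\infty|\ph(s)|\,|s|^{-3}\,ds\le\sum_n\tfrac1{2|z_n|^2}<\infty$. Summing the one-factor identity over $n$ and moving the sum inside the integral by dominated convergence --- the partial sums $\ph_M=\sum_{n\le M}\psi_{z_n}$ satisfy $|\ph_M|\le|\ph|$ pointwise, and $\bigl|z^2/(s^2(z+s))\bigr|\,|\ph(s)|$ is integrable on $\R$ for every fixed $z\in i\R\setminus\{0\}$, since $|z+s|\ge\max(|z|,|s|)$ makes this quantity at most $|z|^2|s|^{-3}$ for $|s|\ge1$ and at most $|z|\,|s|^{-2}\le|z|\,|s|^{-3}$ for $|s|\le1$ --- yields formula~\eqref{eq:polya:exp} with this $\ph$.

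For the converse, let $\ph\colon\R\to\Z$ be non-decreasing with $\int_{-\infty}^\infty|\ph(s)|\,|s|^{-3}\,ds<\infty$. Monotonicity and integer values ensure that $\ph(0^+)$ and $\ph(0^-)$ exist in $\Z$, and integrability near $0$ forces $\ph(0^+)=\ph(0^-)=0$; hence $\ph\ge0$ on $(0,\infty)$ and $\ph\le0$ on $(-\infty,0)$, and $\ph$ is a step function whose jumps, counted with multiplicity, we enumerate as points $z_n\in\R\setminus\{0\}$, so that $\ph=\sum_n\psi_{z_n}$. By Tonelli's theorem, $\sum_{z_n>0}\tfrac1{2z_n^2}=\int_0^\infty\ph(s)\,s^{-3}\,ds<\infty$ and likewise $\sum_{z_n<0}z_n^{-2}<\infty$, so $\sum_n|z_n|^{-2}<\infty$; running the computation of the previous paragraph backwards, with the same dominated-convergence bound, turns~\eqref{eq:polya:exp} into~\eqref{eq:polya}, which by Definition~\ref{def:polya} exhibits $F$ as the Laplace transform of a P\'olya frequency function. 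The one genuinely delicate point is the one-factor identity of the first paragraph --- choosing the regularising terms so that the $z/w$ contributions cancel and the exponent takes precisely the form in~\eqref{eq:polya:exp}; the remainder is routine manipulation of monotone step functions together with dominated convergence.
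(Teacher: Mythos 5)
Your proposal is correct and follows essentially the same route as the paper: the same one-factor identity $z/w-\log(1+z/w)=\int(\tfrac1{z+s}-\tfrac1s+\tfrac z{s^2})\psi_w(s)\,ds$, the same step function $\ph=\sum_n\psi_{z_n}$ with $\int|\ph(s)|\,|s|^{-3}\,ds=\tfrac12\sum_n|z_n|^{-2}$, and the same observation for the converse that integrability of $|\ph(s)|/|s|^3$ forces $\ph$ to vanish near $0$ so that its jumps can be enumerated as the $z_n$. You merely spell out the dominated-convergence interchange that the paper leaves implicit.
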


\begin{proof}
Suppose that $z \in i \R$. Observe that if $z_n > 0$, then
\formula{
 z/z_n - \log(1 + z / z_n) & = \int_{z_n}^\infty \biggl(\frac{1}{s + z} - \frac{1}{s} + \frac{z}{s^2}\biggr) \, ds ,
}
while if $z_n < 0$, then
\formula{
 z/z_n - \log(1 + z / z_n) & = -\int_{-\infty}^{z_n} \biggl(\frac{1}{s + z} - \frac{1}{s} + \frac{z}{s^2}\biggr) \, ds .
}
Therefore, if $f$ is a P\'olya frequency function such that $\laplace f$ has the representation given in the statement of the theorem, and if
\formula[eq:polya:phi]{
 \ph(s) & = \sum_{n = 1}^N \bigl(\ind_{(0, \infty)}(z_n) \ind_{[z_n, \infty)}(s) - \ind_{(-\infty, 0)}(z_n) \ind_{(-\infty, z_n)}(s)\bigr) ,
}
then $\ph : \R \to \Z$ is non-decreasing and integer-valued, $\ph(0) = 0$,
\formula[eq:polya:phi:zn]{
 \int_{-\infty}^\infty \frac{\ph(s)}{s^3} \, ds & = \frac{1}{2} \sum_{n = 1}^\infty \frac{1}{|z_n|^2} \, ,
}
and
\formula{
 \prod_{n = 1}^N \, \frac{e^{z / z_n}}{1 + z / z_n} & = \exp\biggl(\int_{-\infty}^\infty \biggl( \frac{1}{z + s} - \frac{1}{s} + \frac{z}{s^2} \biggr) \ph(s) ds \biggr) ,
}
as desired. Conversely, if $\ph : \R \to \Z$ is a right-continuous non-decreasing integer-valued function and $|s^{-3} \ph(s)|$ is integrable, then necessarily $\ph(s) = 0$ for $s$ in some neighbourhood of $0$. It follows that $\ph$ can be written in the form given in~\eqref{eq:polya:phi}, and~\eqref{eq:polya:phi:zn} remains valid. Therefore, there is a corresponding P\'olya frequency function $f$ such that~\eqref{eq:polya:exp} is satisfied.
\end{proof}

We are now in position to describe the class of convolutions of $\amcm$ functions and P\'olya frequency functions.

\begin{lemma}
\label{lem:bell}
If $f$ is the convolution of a locally integrable $g \in \amcm$ which converges to zero at $\pm\infty$ and a P\'olya frequency function $h$, then
\formula[eq:bell]{
 \laplace f(z) & = \exp\biggl(a z^2 - b z + c + \int_{-\infty}^\infty \biggl( \frac{1}{z + s} - \biggl(\frac{1}{s} - \frac{z}{s^2} \biggr) \ind_{\R \setminus (-1, 1)}(s) \biggr) \ph(s) ds \biggr)
}
for all $z \in i \R \setminus \{0\}$, where $a \ge 0$, $b, c \in \R$ and $\ph : \R \to \R$ satisfies the level crossing condition of Theorem~\ref{thm:bell}:
\begin{enumerate}[label=\textnormal{(\alph*)}]
\item\label{lem:bell:a} $\ph(s) \ge 0$ for $s > 0$, $\ph(s) \le 0$ for $s < 0$, and for all $k \in \Z$ the function $\ph(s) - k$ changes its sign at most once;
\item\label{lem:bell:b}
we have
\formula[eq:bell:b]{
 \biggl( \int_{-\infty}^{-1} + \int_1^\infty \biggr) \frac{|\ph(s)|}{|s|^3} \, ds < \infty ;
}
\item\label{lem:bell:c}
we have
\formula[eq:bell:c]{
 \int_{-1}^1 \re \laplace f(i t) dt & < \infty &\qquad& \text{and} &\qquad \lim_{t \to 0^+} (t \im \laplace f(i t)) & = 0.
}
\end{enumerate}
Conversely, if $a \ge 0$, $b, c \in \R$ and $\ph : \R \to \R$ satisfy conditions~\ref{lem:bell:a}, \ref{lem:bell:b} and~\ref{lem:bell:c} \textup{(}where $\laplace f(z)$ stands for the function defined by~\eqref{eq:bell}\textup{),} then there is a corresponding extended function $f$.
\end{lemma}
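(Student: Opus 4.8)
The plan is to peel off the two convolution factors using Propositions~\ref{prop:amcm:laplace} and~\ref{prop:polya:laplace} and glue their exponential representations.

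\emph{Direct part.} Write $f = g * h$ with $g \in \amcm$ locally integrable, tending to $0$ at $\pm\infty$, and $h$ a P\'olya frequency function. First I would record that $\laplace f(z) = \laplace g(z)\laplace h(z)$ for $z \in i\R \setminus \{0\}$: since $g$ is eventually monotone and tends to $0$, for each fixed $\xi \ne 0$ the truncated integrals $\int_c^d e^{-i\xi x} g(x)\,dx$ stay bounded (Dirichlet's test), so applying Fubini's theorem to $\int_{-A}^A e^{-i\xi x}(g*h)(x)\,dx = \int h(y) e^{-i\xi y}\bigl(\int_{-A-y}^{A-y} e^{-i\xi u}g(u)\,du\bigr)\,dy$ and letting $A \to \infty$ with dominated convergence gives the product formula. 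In particular $f$ is locally integrable, tends to $0$ at $\pm\infty$, and — being weakly bell-shaped by Theorem~\ref{thm:amcm} together with the variation diminishing property of $h$, hence unimodal — is monotone near $\pm\infty$. Now Proposition~\ref{prop:amcm:laplace} gives the exponential representation of $\laplace g$ with a function $\ph_g$ valued in $[0,1]$ on $(0,\infty)$ and in $[-1,0]$ on $(-\infty,0)$, and Proposition~\ref{prop:polya:laplace} gives that of $\laplace h$ with a non-decreasing integer-valued $\ph_h$ that vanishes near $0$. The correction terms in the two exponents differ from the one in~\eqref{eq:bell} only by a real constant and a real multiple of $z$: for $\ph_g$ because $|\ph_g| \le 1$ makes $\int_{\R\setminus(-1,1)} s^{-2}\ph_g(s)\,ds$ finite, and for $\ph_h$ because $\ph_h$ is integer-valued, bounded on $[-1,1]$, and vanishes near $0$. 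Absorbing these into $b$ and $c$ and multiplying yields~\eqref{eq:bell} with $\ph = \ph_g + \ph_h$.

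\emph{Verifying \ref{lem:bell:a}--\ref{lem:bell:c}.} On $(0,\infty)$ one has $\ph_h \ge 0$, so $\ph \ge 0$, and $\ph_h(s) \le \ph(s) \le \ph_h(s)+1$; if $\ph(s) > k$ then $\ph_h(s) \ge k$, so by monotonicity of $\ph_h$ the inequality $\ph(s') \ge k$ persists for all $s' > s$, which rules out a second sign change of $\ph - k$. The case $s < 0$ is symmetric and the case $k = 0$ is the sign condition, giving~\ref{lem:bell:a}; condition~\ref{lem:bell:b} follows from $|\ph| \le 1 + |\ph_h|$. For~\ref{lem:bell:c} I would use the elementary facts $|\laplace h(it)| \le 1$, $\laplace h(it) \to 1$ as $t \to 0$, and $|\arg\laplace h(it)| = O(|t|)$ near $0$ (all from $\sum |z_n|^{-2} < \infty$), together with~\eqref{eq:amcm:extra} for $\laplace g$ and the fact that $\re\laplace g(it)$ is non-increasing in $t > 0$, which forces $t\re\laplace g(it) \to 0$ and hence $t\laplace g(it) \to 0$. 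Writing $\laplace h(it) = \rho(t) e^{i\beta(t)}$, the identity $\re\laplace f(it) = \rho\,(\cos\beta\cdot\re\laplace g(it) - \sin\beta\cdot\im\laplace g(it))$ bounds $\re\laplace f(it)$ near $0$ by $\re\laplace g(it) + C\,|t\im\laplace g(it)|$, which is integrable near $0$; continuity of $\laplace f$ on $i\R\setminus\{0\}$ handles the rest of $[-1,1]$. The second part of~\ref{lem:bell:c} is immediate from $t\laplace f(it) = (t\laplace g(it))\laplace h(it) \to 0$.

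\emph{Converse.} Given $a, b, c$ and $\ph$ satisfying~\ref{lem:bell:a}--\ref{lem:bell:c}, I would use the non-decreasing sequence $(s_k)$ from the level crossing condition to split $\ph = \ph_g + \ph_h$, with $\ph_h$ the non-decreasing integer-valued function taking the value $k$ on $(s_k,s_{k+1})$, the sequence chosen so that $\ph_h$ vanishes on a neighbourhood of $0$, and $\ph_g = \ph - \ph_h \in [0,1]$ on $(0,\infty)$, $\in [-1,0]$ on $(-\infty,0)$. The crucial step is that~\ref{lem:bell:c} — precisely the requirement $\lim_{t\to0^+}(t\im\laplace f(it)) = 0$ — makes this normalisation possible, i.e. forces $\ph(s) \le 1$ for $s$ near $0^+$ and $\ph(s) \ge -1$ for $s$ near $0^-$: otherwise a careful analysis of the asymptotics of $\laplace f(it)$ as $t \to 0^+$ shows that $\re\log\laplace f(it)$ grows at least like $\log(1/t)$ while $\arg\laplace f(it)$ stays in a range where $t\im\laplace f(it)$ cannot tend to $0$. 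With this in hand, $\ph_h$ satisfies the hypotheses of Proposition~\ref{prop:polya:laplace} and corresponds to a P\'olya frequency function $h$; and transferring~\eqref{eq:bell:c} across the benign factor $\laplace h$ (exactly as in the direct part) shows that $\ph_g$ satisfies condition~\ref{prop:amcm:laplace:b} of Proposition~\ref{prop:amcm:laplace}, producing a locally integrable $g \in \amcm$ tending to $0$ at $\pm\infty$. Choosing the free parameter $b$ of $h$ and the free additive constant of $\laplace g$ so that the correction-term reconciliation returns the prescribed $b$ and $c$, the convolution $f := g * h$ has Fourier transform~\eqref{eq:bell} and, exactly as in the direct part, is a locally integrable extended function tending to $0$ at $\pm\infty$ and monotone near $\pm\infty$. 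The main obstacle throughout is the handling of condition~\ref{lem:bell:c} near $s = 0$: extracting from it that the integer part of $\ph$ must vanish near the origin, and transferring the two analytic side conditions back and forth across the P\'olya factor.
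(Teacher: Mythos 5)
Your overall architecture coincides with the paper's proof in both directions: factor $\laplace f = \laplace g \cdot \laplace h$, apply Propositions~\ref{prop:amcm:laplace} and~\ref{prop:polya:laplace}, write $\ph = \ph_g + \ph_h$ with $\ph_g$ valued in $[-1,1]$ (of the appropriate sign) and $\ph_h$ integer-valued, non-decreasing and vanishing near $0$, and absorb the mismatch between the three compensating terms into $b$ and $c$. Your verifications of~\ref{lem:bell:a} and~\ref{lem:bell:b}, and your transfer of~\ref{lem:bell:c} across the factor $\laplace h$, are correct and in fact more detailed than the paper's.

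The one step that would fail as written is the step you yourself call crucial: showing that~\ref{lem:bell:c} forces $s_1 > 0$ and $s_{-1} < 0$, so that $\ph_h$ may be chosen to vanish near the origin. You attribute this to the second half of~\ref{lem:bell:c} alone, asserting that $\arg F(it)$ "stays in a range where $t \im F(it)$ cannot tend to $0$" (writing $F$ for the right-hand side of~\eqref{eq:bell}). That is false in general: take $\ph(s) = \ind_{(0,1)}(s) - \ind_{(-1,0)}(s)$ and $a = b = 0$. Then the two singular contributions to $\im \log F(it)$ cancel exactly, so $\arg F(it) \equiv 0$ and $t \im F(it) \equiv 0$, even though $\ph \ge 1$ on $(0,1)$ and $|F(it)| \sim C t^{-2}$. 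The contradiction must combine both halves of~\ref{lem:bell:c}, as the paper does: the lower bound $|F(it)| \ge C/t$ (which you correctly extract from $\re \log F(it) \ge \tfrac{1}{2} \log(1 + t^{-2}) + O(1)$), together with $t \im F(it) \to 0$, forces $\re F(it) \ge C/(2t)$ for small $t > 0$, and it is then the integrability of $\re F(it)$ over $(0,1)$ --- the \emph{first} half of~\ref{lem:bell:c} --- that is violated. With this correction the converse closes exactly as you describe.
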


\begin{proof}
Suppose that $g \in \amcm$, $g$ is locally integrable and $g$ converges to zero at $\pm \infty$, and $h$ is a P\'olya frequency function. Then the convolution $f = g * h$ converges to zero at $\pm \infty$, and we have $\laplace f(z) = \laplace g(z) \laplace h(z)$ for all $z \in i \R \setminus \{0\}$. By Propositions~\ref{prop:amcm:laplace} and~\ref{prop:polya:laplace}, there are constants $\tilde{a} \ge 0$, $\tilde{b}, \tilde{c} \in \R$ and functions $\ph_1 : \R \to [-1, 1]$ and $\ph_2 : \R \to \Z$, satisfying appropriate conditions, such that for all $z \in i \R \setminus \{0\}$ we have
\formula{
 \laplace f(z) & = \exp\biggl(\tilde{a} z^2 - \tilde{b} z + \tilde{c} + \int_{-\infty}^\infty \biggl( \frac{1}{z + s} - \frac{1}{s} \ind_{\R \setminus (-1, 1)}(s)\biggr) \ph_1(s) ds \\
 & \hspace*{13.5em} + \int_{-\infty}^\infty \biggl( \frac{1}{z + s} - \frac{1}{s} + \frac{z}{s^2} \biggr) \ph_2(s) ds \biggr) .
}
This is equivalent to~\eqref{eq:bell} with $\ph(s) = \ph_1(s) + \ph_2(s)$ and $a = \tilde{a}$,
\formula[eq:bell:constants]{
\begin{aligned}
 b & = \tilde{b} + \biggl(\int_{-\infty}^{-1} + \int_1^\infty\biggr) \frac{1}{s^2} \, \ph_1(s) ds - \int_{-1}^1 \frac{1}{s^2} \, \ph_2(s) ds , \\
 c & = \tilde{c} - \int_{-1}^1 \frac{1}{s} \, \ph_2(s) ds .
\end{aligned}
}
Indeed, $\ph_1$ is bounded, and $\ph_2(s) = 0$ for $s$ in some neighbourhood of $0$, so that all integrals in~\eqref{eq:bell:constants} are convergent. It remains to verify that $\ph$ has all the desired properties.

Since both $\ph_1$ and $\ph_2$ are non-negative on $(0, \infty)$ and non-positive on $(-\infty, 0)$, $\ph$ has the same property. Integrability of $|\ph(s)| / |s|^3$ over $\R \setminus (-1, 1)$ follows from integrability of $|\ph_2(s)| / |s|^3$ and from $\ph_1$ being bounded. Furthermore, $\ph(s) \in [\ph_2(s), \ph_2(s) + 1]$ for $s > 0$, while $\ph(s) \in [\ph_2(s) - 1, \ph_2(s)]$ for $s < 0$, which easily implies that for $k \in \Z \setminus \{0\}$ the function $\ph(s) - k$ changes its sign at most once. Indeed, this change takes place at the argument $s$ such that $\ph_2(s^-) < k \le \ph_2(s^+)$ when $k > 0$, and at the argument $s$ such that $\ph_2(s^-) \le k < \ph_2(s^+)$ when $k < 0$ (and if such $s$ does not exists, then $\ph(s) - k$ has constant sign). When $k = 0$, $\ph(s) - k$ changes its sign at $s = 0$. Finally, property~\eqref{eq:bell:c} is satisfied, because $F(z) = \laplace g(z)$ satisfies condition~\eqref{eq:amcm:extra} in Proposition~\ref{prop:amcm:laplace} and $\laplace h$ is continuous on $i \R$.

We proceed with the proof of the converse implication. Suppose that $a$, $b$, $c$ and $\ph$ are as in the statement of the lemma. Thanks to the level-crossing condition, $\ph$ is locally bounded. This, combined with condition~\eqref{eq:bell:b}, implies that the integral in~\eqref{eq:bell} converges for every $z \in \R \setminus \{0\}$. We denote the right-hand side of~\eqref{eq:bell:b} by $F(z)$.

For $k \in \Z \setminus \{0\}$ the function $\ph(s) - k$ changes its sign at most once for all $k \in \Z \setminus \{0\}$; let us denote the argument at which this change occurs by $s_k$, namely,
\formula{
 s_k & = \sup \{ s \in \R : \ph(s) \le k \} &\qquad& \text{if $k > 0$,} \\
 s_k & = \inf \{ s \in \R : \ph(s) \ge k \} &\qquad& \text{if $k < 0$}
}
(possibly $s_k = \infty$ for some $k > 0$ and $s_k = -\infty$ for some $k < 0$). We also set $s_0 = 0$, and we let $\ph_2(s) = k$ for $s \in [s_k, s_{k+1})$ when $k \ge 0$ and $\ph_2(s) = k$ for $s \in [s_{k-1}, s_k)$ when $k \le 0$. Finally, we define $\ph_1(s) = \ph(s) - \ph_2(s)$.

We claim that $s_{-1} < 0$ and $s_1 > 0$. Indeed, suppose that $s_1 = 0$, so that $\ph(s) \ge 1$ for all $s > 0$. From the definition of $F(z)$ it follows that for $t \in (0, 1)$ we have
\formula{
 |F(i t)| & = \exp\biggl(-a t^2 + c + \int_{-\infty}^\infty \biggl( \frac{1}{t^2 + s^2} - \frac{1}{s^2} \ind_{\R \setminus (-1, 1)}(s) \biggr) s \ph(s) ds \biggr) \\
 & \ge \exp\biggl(-a t^2 + c + \biggl(\int_{-\infty}^{-1} + \int_1^\infty\biggr) \biggl( \frac{1}{1 + s^2} - \frac{1}{s^2} \biggr) s \ph(s) ds + \int_0^1 \frac{s}{t^2 + s^2} \, ds \biggr) ,
}
so that for some $C > 0$,
\formula{
 |F(i t)| & \ge C \exp\biggl(\int_0^1 \frac{s}{t^2 + s^2} \, ds \biggr) = C \sqrt{1 + \frac{1}{t^2}} \ge \frac{C}{t} \, .
}
This, however, contradicts~\eqref{eq:bell:c}: since $t \im F(i t)$ converges to zero as $t \to 0^+$, we necessarily have $\re F(i t) \ge C / (2 t)$ for $t$ in some right neighbourhood of $0$, and so the integral of $\re F(i t)$ divergent, contrary to~\eqref{eq:bell:c}. We conclude that $s_1 > 0$, and similarly we show that $s_1 < 0$.

It is easy to see that $\ph_1 : \R \to [-1, 1]$ and that $\ph_1$ is non-negative on $(0, \infty)$ and non-positive on $(-\infty, 0)$. From the definition of $\ph_2$ it also follows that $\ph_2$ maps $\R$ into $\Z$ and $\ph_2$ is non-decreasing. We already proved that $s_{-1} < 0$ and $s_1 > 0$, and thus $\ph_2(s) = 0$ for $s \in [s_{-1}, s_1)$. Hence, the function $|\ph_2(s)| / |s|^3$ is integrable over $(-1, 1)$. Since $|\ph_2(s)| \le |\ph(s)|$, the function $|\ph_2(s)| / |s|^3$ is also integrable over $\R \setminus (-1, 1)$.

With the above information, we let $\tilde{a} = a$ and we define $\tilde{b}$ and $\tilde{c}$ so that relations~\eqref{eq:bell:constants} are satisfied. We conclude that there is a P\'olya frequency function $h$ which corresponds to parameters $\tilde{a}$, $\tilde{b}$ and $\ph_2$ in representation~\eqref{eq:polya:exp}. Since $\laplace h$ is a continuous function on $i \R$ and $\laplace h(0) > 0$, the function $G(z) = F(z) / \laplace h(z)$ satisfies conditions~\eqref{eq:amcm:exp} and~\eqref{eq:amcm:extra} in Proposition~\ref{prop:amcm:laplace}, with parameters $\tilde{c}$ and $\ph_1$. Therefore, there is a locally integrable $\amcm$ extended function $g$ which converges to zero at $\pm \infty$ and such that $G(z) = \laplace g(z)$.

The extended function $f = g * h$ is the convolution of a locally integrable extended function $g \in \amcm$ which converges to zero at $\pm\infty$, and a Pólya frequency function~$h$. Furthermore, $\laplace f(z) = \laplace g(z) \laplace h(z) = F(z)$ for $z \in i \R \setminus \{0\}$. Thus, $f$ has all desired properties.
\end{proof}

\begin{proof}[Proof of Theorem~\ref{thm:bell}]
The desired result follows from Lemma~\ref{lem:bell} combined with Theorem~\ref{thm:amcm} and the fact that the convolution of a weakly bell-shaped function with a variation diminishing convolution kernel is again a weakly bell-shaped function.
\end{proof}

We remark that the function $\ph$ in Theorem~\ref{thm:bell} can be identified by studying the holomorphic extension of $\laplace f$. More precisely, $\log \laplace f(z)$ extends to a holomorphic function in the upper complex half-plane (this extension is given again by~\eqref{eq:bell}), and $\pi \ph(-s)$ is the boundary limit of the harmonic function $\arg \laplace f(z) = \im \log \laplace f(z)$ at $z = s$.

\begin{proof}[Proof of Corollary~\ref{cor:bell}]
Suppose that $a \ge 0$, $b, c \in \R$, $\nu : \R \setminus \{0\} \to \R$ is a function such that $x \nu(x)$ and $x \nu(-x)$ are completely monotone functions of $x > 0$, and in addition
\formula[eq:nu:x2]{
 \int_{-1}^1 x^2 \nu(x) dx < \infty .
}
Let $\pi_+$ and $\pi_-$ denote the Bernstein measures of the completely monotone functions $x \nu(x)$ and $x \nu(-x)$ (with $x > 0$), respectively. Finally, let $\ph_+(s) = \pi_+((0, s))$ and $\ph_-(s) = \pi_-((0, s))$ for $s > 0$. Integrating by parts, we obtain
\formula{
 \nu(x) & = \frac{1}{x} \int_{[0, \infty)} e^{-s x} \pi_+(ds) = \int_0^\infty e^{-s x} \ph_+(s) ds = \laplace \ph_+(x) ,
}
for $x > 0$, and similarly $\nu(x) = \laplace \ph_-(-x)$ for $x < 0$. Condition~\eqref{eq:nu:x2} translates easily into
\formula{
 \int_1^\infty \frac{\ph_+(s)}{s^3} \, ds & < \infty , & \int_1^\infty \frac{\ph_-(s)}{s^3} \, ds & < \infty . 
}
Let $\ph(s) = \ph_+(s)$ for $s > 0$, $\ph(s) = -\ph_-(-s)$ for $s < 0$ and $\ph(0) = 0$. Then $\ph$ is non-decreasing and it satisfies
\formula{
 \biggl( \int_{-\infty}^{-1} + \int_1^\infty \biggr) \frac{|\ph(s)|}{|s|^3} \, ds < \infty .
}
We find that for $z \in i \R \setminus \{0\}$, with the integrals with respect to $x$ understood as improper integrals, we have
\formula{
 & \int_{-\infty}^\infty (e^{-x z} - (1 + |x| - x z) e^{-|x|}) \nu(x) dx \\
 & \hspace*{2em} = \int_{-\infty}^0 (e^{-x z} - (1 - x - x z) e^x) \nu(x) dx + \int_0^\infty (e^{-x z} - (1 + x - x z) e^{-x}) \nu(x) dx \\
 & \hspace*{2em} = \int_{-\infty}^0 \biggl(-\frac{1}{z + s} - \frac{2 - s + z}{(-1 + s)^2} \biggr) \ph_-(s) ds + \int_0^\infty \biggl(\frac{1}{z + s} - \frac{2 + s - z}{(1 + s)^2}\biggr) \ph_+(s) ds \\
 & \hspace*{2em} = \int_{-\infty}^\infty \biggl(\frac{1}{z + s} - \frac{2 \sign s + s - z}{(1 + |s|)^2}\biggr) \ph(s) ds .
}
The above calculation involves simply Fubini's theorem when $\nu(x)$ is integrable near $\infty$, or, equivalently, if $|s^{-1} \ph(s)|$ is integrable near zero. In the general case the same argument works when $\re z > 0$ in the integral over $x > 0$ and when $\re z < 0$ in the integral over $x < 0$, and the desired identity follows by continuity of both integrals as $z$ approaches $i \R \setminus \{0\}$; we omit the details and refer to Lemma~\ref{lem:cm:reg} for a similar calculation.

It follows that
\formula{
 \laplace f(z) & = \exp\biggl(a z^2 - b z + c + \int_{-\infty}^\infty \biggl( \frac{1}{z + s} - \frac{2 \sign s + s - z}{(1 + |s|)^2} \biggr) \ph(s) ds \biggr) \\
 & = \exp\biggl(\tilde{a} z^2 - \tilde{b} z + \tilde{c} + \int_{-\infty}^\infty \biggl( \frac{1}{z + s} - \biggl(\frac{1}{s} - \frac{z}{s^2} \biggr) \ind_{\R \setminus (-1, 1)}(s) \biggr) \ph(s) ds \biggr) ,
}
where $\tilde{a} = a$,
\formula{
 \tilde{b} & = b + \int_{-\infty}^\infty \biggl( \frac{1}{s^2} \ind_{\R \setminus (-1, 1)}(s) - \frac{1}{(1 + |s|)^2} \biggr) \ph(s) ds , \\
 \tilde{c} & = c + \int_{-\infty}^\infty \biggl( \frac{1}{s} \ind_{\R \setminus (-1, 1)}(s) - \frac{2 \sign s + s}{(1 + |s|)^2}\biggr) \ph(s) ds .
}
Therefore, $\laplace f(z)$ satisfies~\eqref{eq:bell}. Finally, $\ph(0) = 0$ and $\ph$ is continuous at $0$, which easily implies that condition~\eqref{eq:bell:c} is satisfied.

We have thus proved that all assumptions of Theorem~\ref{thm:bell} are satisfied, and so its assertion applies to $f$.

Finally, the above calculations, combined with the final assertion of Theorem~\ref{thm:bell}, also show that any parameters $a$, $b$, $c$ and $\nu$ satisfying the assumptions listed in the statement of the corollary correspond to some function $f$ with the desired properties.
\end{proof}

\begin{proof}[Proof of Corollary~\ref{cor:stable}]
The density function of the L\'evy measure of a stable distribution is of the form
\formula{
 \nu(x) & = c_+ x^{-1 - \alpha} &\qquad& \text{for $x > 0$,} \\
 \nu(x) & = c_- (-x)^{-1 - \alpha} &\qquad& \text{for $x < 0$,}
}
where $c_+, c_- \ge 0$ and $\alpha \in (0, 2)$. Therefore, $x \nu(x) = c_+ x^{-\alpha}$ and $x \nu(-x) = c_- x^{-\alpha}$ are completely monotone functions of $x > 0$, and the desired result follows from Corollary~\ref{cor:bell}.
\end{proof}

\begin{remark}
Suppose that $g$ is a locally integrable extended function, possibly with a non-negative atom at $0$, and $g'$ is completely monotone on $(0, \infty)$ and $-g'$ is absolutely monotone on $(-\infty, 0)$. In Remark~\ref{rem:amcm:broad} we observed that this class of functions is weakly bell-shaped in the broad sense. It is easy to see that any such $g$ can be convolved with any P\'olya frequency function $h$ (because $|g|$ grows at most at a linear rate at $\pm\infty$, while $h$ has exponential decay at $\pm\infty$), and the convolution $f = g * h$ is again weakly bell-shaped in the broad sense.

Apparently, the Fourier transform of $g$ (defined, for example, in the sense of distributions) admits a description similar to that of Proposition~\ref{prop:amcm:laplace}. It would be interesting to derive an analogue of Lemma~\ref{lem:bell} for the class of convolutions $f = g * h$ discussed above.
\end{remark}

%
%

\section{Discussion}
\label{sec:ex}

The \emph{level crossing condition} in Theorem~\ref{thm:bell} requires $\ph(s) - k$ to change its sign at most once for $k \in \Z$. As discussed in the introduction, this assumption seems rather artificial, and it is natural to ask to what extent it can be relaxed. In this section we disprove three natural conjectures and discuss several examples. We used \emph{Wolfram Mathematica~10} computer algebra system for the (otherwise tiresome, but elementary) evaluation of some explicit expressions.

\subsection{Infinitely divisible distributions with $\amcm$ L\'evy measure}
\label{sec:ex:1}

It is natural to ask whether the level crossing condition~\ref{thm:bell:a} in Theorem~\ref{thm:bell} is completely superfluous, except for the assumption that $\ph(s) \ge 0$ for $s > 0$ and $\ph(s) \le 0$ for $s < 0$. With the notation of Corollary~\ref{cor:bell}, this modification amounts to relaxing complete monotonicity of $x \nu(x)$ and $x \nu(-x)$ for $x > 0$ to complete monotonicity of $\nu(x)$ and $\nu(-x)$ for $x > 0$. To see the equivalence of these extensions, recall that, for $x > 0$, $\nu(x)$ and $\nu(-x)$ are the Laplace transforms of $\ph(s) \ind_{(0, \infty)}(s)$ and $-\ph(-s) \ind_{(0, \infty)}(s)$, respectively. In yet another words: is Corollary~\ref{cor:bell} true for any $\nu \in \amcm$?

The answer is \emph{no}: we provide an example of an integrable function $f$ which is not weakly bell-shaped, which is in addition concentrated on $(0, \infty)$, and which satisfies all assumptions of Theorem~\ref{thm:bell} except the level crossing condition~\ref{thm:bell:a}. In other words, there is an infinitely divisible distribution on $(0, \infty)$ with L\'evy measure $\nu(x) \ind_{(0, \infty)}(x) dx$ for a completely monotone $\nu$, which is not bell-shaped.

\begin{figure}
\centering
\begin{tabular}{cc}
\includegraphics[width=0.4\textwidth]{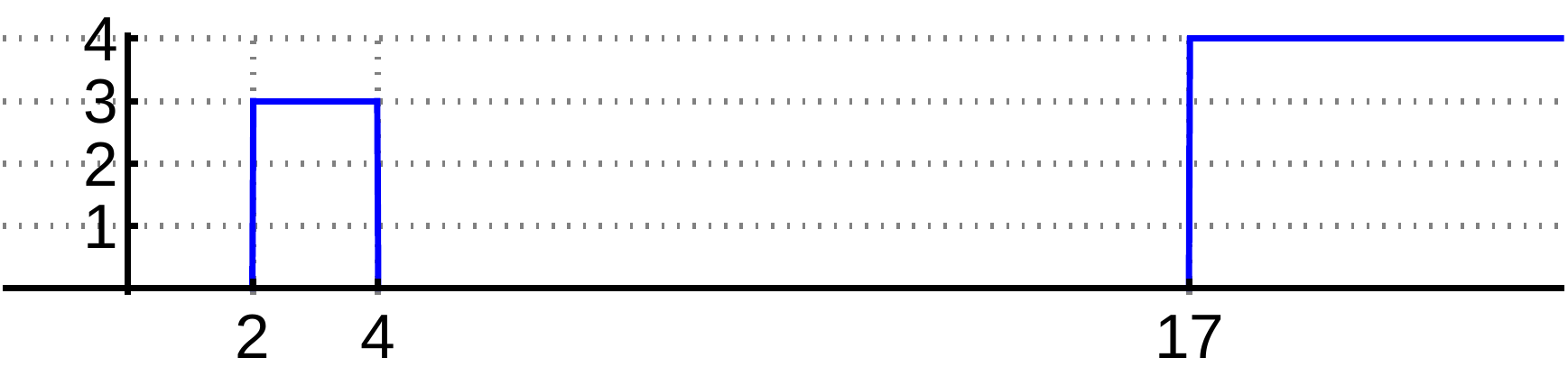} &
\includegraphics[width=0.4\textwidth]{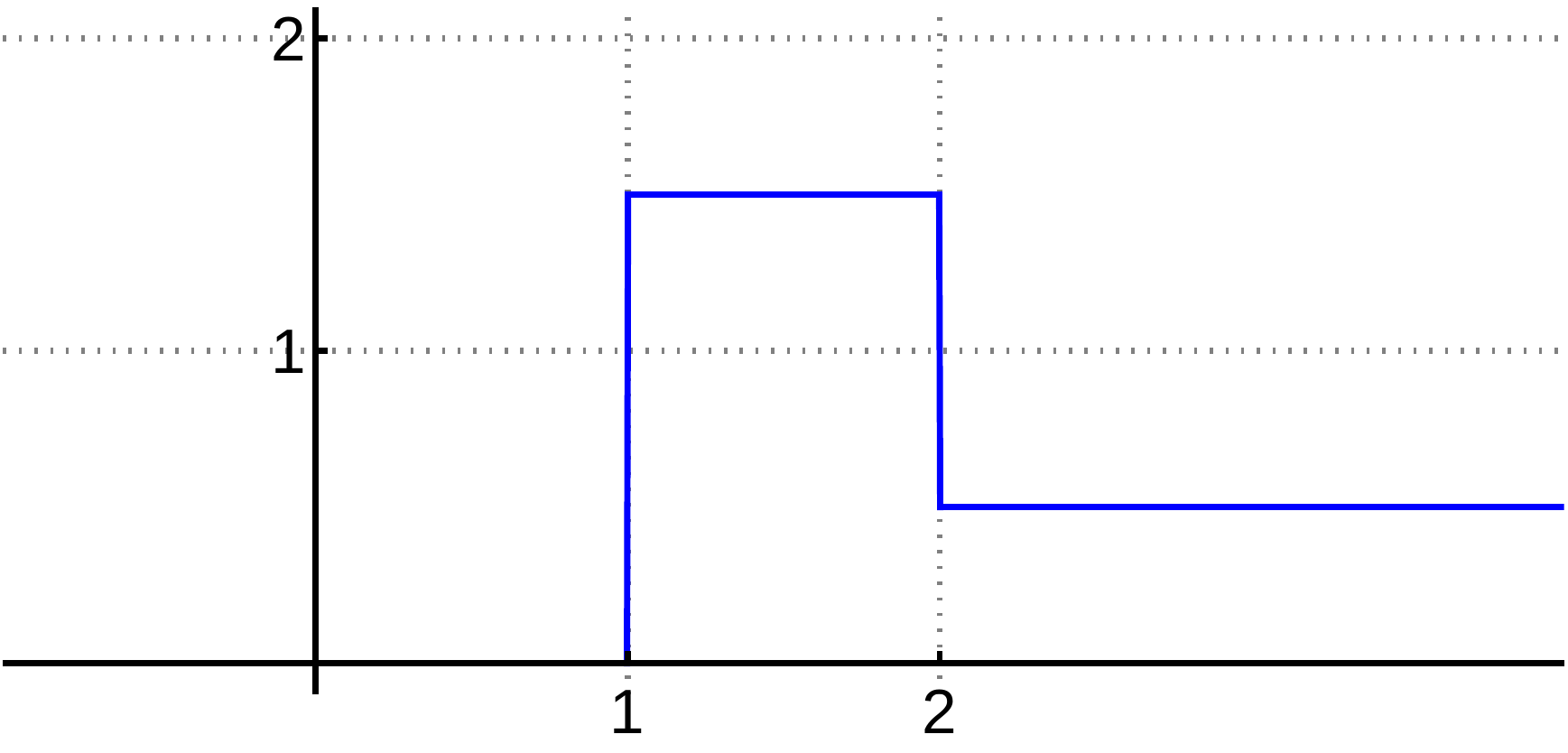} \\
(a) & (b)
\end{tabular}
\caption{Plot of the function $\ph(s)$ for the examples considered in: (a) Sections~\ref{sec:ex:1} and~\ref{sec:ex:2}; (b) Section~\ref{sec:ex:3}.}
\label{fig:ex}
\end{figure}

Consider a function $f : \R \to \R$ such that
\formula{
 \laplace f(z) & = \frac{(1 + z/4)^3}{(1 + z/2)^3 (1 + z/17)^4}
}
for $z \in i \R$. This corresponds to $a = c = 0$, $b = \tfrac{67}{68}$ and $\ph(s) = 3 \cdot \ind_{[2, 4)}(s) + 4 \cdot \ind_{[17, \infty)}(s)$ in Theorem~\ref{thm:bell} (see Figure~\ref{fig:ex}(a)), or $a = 0$ and $\nu(x) = x^{-1} (3 e^{-2 x} - 3 e^{-3 x} + 4 e^{-17 x}) \ind_{(0, \infty)}(x)$ in Corollary~\ref{cor:bell}. Clearly, the only assumption of Theorem~\ref{thm:bell} which is violated by $f$ is the level crossing condition: when $k = 1$ or $k = 2$, $\ph(s) - k$ changes its sign three times.

By inspecting the Laplace transform of $f$, it is easy to see that $f$ is constant zero on $(-\infty, 0)$, it is integrable with integral $1$, and it is twice continuously differentiable on $\R$. Furthermore, $\nu$ is positive on $(0, \infty)$, and so $f$ is the density function of an infinitely divisible distribution is positive on $(0, \infty)$ and $f(x) > 0$ for $x > 0$. It follows that in any neighbourhood of $0$ there is a number $x > 0$ such that $f''(x) > 0$, and there are arbitrarily large numbers $x > 0$ such that $f''(x) > 0$. Using \emph{Mathematica}, we find that
\formula{
 f(x) & = \tfrac{83521}{36450000} \bigl((284 + 888 x + 360 x^2) e^{-2 x} \\
 & \hspace*{10em} - (284 + 5148 x + 45630 x^2 - 494325 x^3) e^{-17 x}\bigr) ,
}
which leads to
\formula{
 f''(\tfrac{1}{4}) & = \tfrac{83521}{2332800000} (-38168123 e^{-17/4} - 92032 e^{-1/2}) < 0 , \\
 f''(\tfrac{1}{2}) & = \tfrac{83521}{2332800} (271849 e^{-17/2} - 64 e^{-1}) > 0 , \\
 f''(\tfrac{3}{4}) & = \tfrac{83521}{2332800000} (1787319463 e^{-51/4} - 24448 e^{-3/2}) < 0 .
}
Therefore, $f''$ changes its sign at least $4$ times (in fact --- exactly $4$ times). In particular, $f$ is not weakly bell-shaped.

The above function $f$ is not infinitely smooth. A similar example of a smooth function can be obtained as a convolution of $f$ with a smooth, sufficiently localised P\'olya frequency function $g$. We can choose $g$ to be the Gauss--Weierstrass kernel with appropriately small variance if we do not require $f * g$ to be concentrated on $(0, \infty)$. Otherwise, we need to choose $g$ to be concentrated on $(0, \infty)$ as well, and the expression for such a function $g$ is less explicit.

To be specific, consider $m$ sufficiently large, let $z_n = m 2^n$ for $n = 1, 2, \ldots$, and define $g_m$ to be the P\'olya frequency function with Laplace transform
\formula{
 \laplace g_m(z) & = \prod_{n = 1}^\infty \frac{1}{1 + z / z_n} \, .
}
With the notation of Definition~\ref{def:polya}, $g$ corresponds to $a = 0$, $b = \sum_{n = 1}^\infty z_n^{-1} = m^{-1}$, $N = \infty$ and $z_n$ as defined above. The distribution $g_m(x) dx$ is the convolution of infinitely many exponential measures $z_n \exp(-z_n x) \ind_{(0, \infty)}(x) dx$, so it is concentrated on $(0, \infty)$. The convolution of $n$ exponential distributions has $n - 1$ continuous derivatives, which easily implies that $g_m$ is smooth. Furthermore, $g_m(x) = m g_1(m x)$, so the family $g_m$ is an approximate identity. Finally, $g_m$ has the representation given in Theorem~\ref{thm:bell} and Corollary~\ref{cor:bell} with $a = c = 0$,
\formula{
 \ph_m(s) & = \sum_{j = 1}^\infty \ind_{[z_j, \infty)}(s)
}
and
\formula{
 \nu_m(x) & = \laplace \ph_+(x) = \sum_{j = 1}^\infty \frac{e^{-z_j x}}{x} \, .
}

The above observations imply that $f * g_m$ is a smooth function concentrated on $(0, \infty)$, and the density function $\nu(x) + \nu_m(x)$ of the L\'evy measure of $f * g_m$ (which is the sum of the densities of L\'evy measures of $f$ and $g$) is a completely monotone function on $(0, \infty)$. Additionally, $(f * g_m)'' = f'' * g_m$ converges pointwise to $f''$ as $m \to \infty$, so if $m$ is large enough, then $(f * g_m)''$ changes its sign more than twice. In particular, $f * g_m$ is not strictly bell-shaped.

\subsection{Self-decomposable distributions}
\label{sec:ex:2}

With the notation of Corollary~\ref{cor:bell}, $f$ is the density function of a \emph{positive self-decomposable distribution} if $f$ is positive on $(0, \infty)$, constant zero on $(-\infty, 0)$, it is integrable with integral $1$ and $x \nu(x)$ is a non-increasing function of $(0, \infty)$. T.~Simon stated two conjectures in~\cite{bib:s15}, which essentially state that density functions of positive self-decomposable distributions are weakly bell-shaped. This also turns out to be \emph{false}. More precisely, below we show that the functions $f$ and $f * g_m$ introduced in the previous section are counter-examples to both conjectures of T.~Simon.

Let $f$ be the function defined in the previous section. If $\nu(x) dx$ is the L\'evy measure of $f$, then
\formula{
 x \nu(x) & = 3 e^{-2 x} - 3 e^{-4 x} + 4 e^{-17 x}
}
for $x > 0$, which is easily shown to be decreasing: the function
\formula{
 2 e^{-2 x} (x \nu(x))' & = -3 + 6 e^{-2 x} - 34 e^{-15 x}
}
attains its global maximum at $x = \tfrac{1}{13} \log \tfrac{85}{2}$, and the maximal value is
\formula{
 -3 + \tfrac{26}{5} (\tfrac{2}{85})^{2/13} & < 0 .
}
It follows that $f$ is the density function of a positive self-decomposable distribution which is not weakly bell-shaped: in the previous section we proved that $f$ is twice continuously differentiable and $f''$ changes its sign more than twice. On the other hand, the limit of $x \nu(x)$ as $x \to 0$ is $4$. Conjecture~2 in~\cite{bib:s15} asserts that if the limit of $x \nu(x)$ as $x \to 0$ is greater than $n + 1$, then $f^{(n)}$ changes its sign $n$ times. This claim is clearly false for $n = 2$.

As in the previous section, the convolution of $f$ with a smooth, sufficiently localised P\'olya frequency function $g_m$ concentrated on $(0, \infty)$ provides an example of an smooth density function of a positive self-decomposable distribution which is not strictly bell-shaped. The density function of the L\'evy measure of $f * g_m$ is $\nu(x) + \nu_m(x)$ (the sum of the densities of L\'evy measures of $f$ and $g_m$), so that $x (\nu(x) + \nu_m(x))$ diverges to infinity as $x \to 0^+$. Therefore, Conjecture~1 in~\cite{bib:s15} would imply that $f$ is strictly bell-shaped, which we already know to be false.

\subsection{Functions corresponding to $\ph$ uniformly close to non-decreasing functions}
\label{sec:ex:3}

The function $\ph$ in Theorem~\ref{thm:bell} can only decrease by at most $1$ compared to its previous maximal value. For this reason it is natural to conjecture that Theorem~\ref{thm:bell} remains valid if we relax the the level crossing condition~\ref{thm:bell:a} to the following one: $\ph(s_2) - \ph(s_1) \ge -1$ when $s_1 \le s_2$. However, also this conjecture turns out to be \emph{false}, as shown by the following simple example. Note that the argument used below can be easily adapted to show that an even stronger condition: $\ph(s_2) - \ph(s_1) \ge -\eps$ when $s_1 \le s_2$, with arbitrarily small $\eps > 0$, is not sufficient.

Let $f : \R \to \R$ be defined by
\formula{
 \laplace f(z) & = \frac{1 + z/2}{(1 + z)^{3/2}}
}
for $z \in i \R$. This corresponds to $a = c = 0$, $b = 1$ and $\ph(s) = \tfrac{3}{2} \ind_{[1, 2)}(s) + \tfrac{1}{2} \ind_{[2, \infty)}(s)$ in Theorem~\ref{thm:bell} (see Figure~\ref{fig:ex}(b)), and $f$ is easily found to be equal to
\formula{
 f(x) & = \frac{1}{2 \sqrt{\pi}} \, \frac{1 + 2 x}{\sqrt{x}} \, e^{-x} \ind_{(0, \infty)}(x) \, .
}
We claim that for sufficiently small $t > 0$, the convolution $f * G_t$ of $f$ and the Gauss--Weierstrass kernel $G_t$ is not bell-shaped: $(f * G_t)^{(8)}$ changes its sign more than $8$ times.

Let $g(x) = (2 \sqrt{\pi})^{-1} x^{-1/2} \ind_{(0, \infty)}(x)$ and $h(x) = (g * G_1)^{(8)}(x)$. Since $g$ is completely monotone on $(0, \infty)$ and locally integrable, $g$ is weakly bell-shaped, and therefore $h$~changes its sign $8$ times. Furthermore, $g^{(8)}$ is positive on $(0, \infty)$, so $h$ is positive near $\infty$. If follows that there is an increasing sequence $x_1, x_2, \ldots, x_9$ such that $(-1)^{j - 1} h(x_j) > 0$ for $j = 1, 2, \ldots, 9$.

Note that $f - g$ is a bounded function. Therefore, with the usual notation $\|f\|_1 = \int_{-\infty}^\infty |f(x)| dx$ and $\|f\|_\infty = \sup \{|f(x)| : x \in \R\}$, we have
\formula{
 |(f * G_t)^{(8)}(x) - (g * G_t)^{(8)}(x)| & = |(f - g) * G_t^{(8)}(x)| \\
 & \le \|f - g\|_\infty \|G_t^{(8)}\|_1 = t^{-4} \|f - g\|_\infty \|G_1^{(8)}\|_1
}
for all $x \in \R$. Furthermore, $g * G_t(x) = t^{-1/4} g * G_1(t^{-1/2} x)$, so $g * G_t^{(8)}(x) = t^{-17/4} h(t^{-1/2} x)$. It follows that
\formula{
 |t^{17/4} (f * G_t)^{(8)}(t^{1/2} x) - h(x)| & \le t^{1/4} \|f - g\|_\infty \|G_1^{(8)}\|_1
}
for all $x \in \R$. By considering $x = t^{-1/2} x_j$, we conclude that if $t > 0$ is sufficiently small, we have $(-1)^{j - 1} (f * G_t)^{(8)}(t^{1/2} x_j) > 0$ for $j = 1, 2, \ldots, 9$.

From the exponential representation of Stieltjes functions (see~\cite{bib:ad56,bib:ssv12}) and the expression for $\laplace f$ it follows that $f$ is not completely monotone on $(0, \infty)$, and indeed using \emph{Mathematica} we easily find that $f^{(8)}(4) = -\tfrac{11598375}{67108864} e^{-4} \pi^{-1/2} < 0$. Therefore, $(f * G_t)^{(8)}(4) < 0$ for sufficiently small $t > 0$.

It follows that for $t > 0$ sufficiently small we have $(-1)^{j - 1} (f * G_t)^{(8)}(t^{1/2} x_j) > 0$ for $j = 1, 2, \ldots, 9$, $(f * G_t)^{(8)}(4) < 0$ and $t^{1/2} x_9 < 4$. However, this means that $(f * G_t)^{(8)}$ changes its sign at least $9$ times, and so $f$ is not weakly bell-shaped.

\subsection{Previously known classes of bell-shaped functions}

The following classes of functions that have been previously shown to be bell-shaped are included in Theorem~\ref{thm:bell}:
\begin{enumerate}[label=\textnormal{(\alph*)}]
\item The Gauss--Weierstrass kernel $G_t$: the $n$-th derivative of $G_t$ is equal to $G_t$ multiplied by the Hermite polynomial of degree $n$, and thus it has exactly $n$ zeroes. In Theorem~\ref{thm:bell}, $G_t$ corresponds to $a = t$, $b = c = 0$ and $\ph(s) = 0$.
\item Functions $f_p(x) = (1 + x^2)^{-p}$, where $p > 0$, are bell-shaped, because $(1 + x^2)^{p + n} f_p^{(n)}$ is a polynomial of degree $n$ for $n = 0, 1, 2, \ldots$\, The Fourier transform of $f_p$ is given by
\formula{
 \laplace f_p(z) & = c_p |z|^{p-1/2} K_{p-1/2}(|z|)
}
for $z \in i \R$, and it corresponds to $a = 0$ and
\formula{
 \ph(s) & = \tfrac{1}{\pi} \arg(i J_{p-1/2}(|s|) - Y_{p-1/2}(|s|)) \sign s
}
in Theorem~\ref{thm:bell}. Here $c_p > 0$ is a constant, $K_\nu$, $J_\nu$ and $Y_\nu$ denote appropriate Bessel functions and $\arg$ stands for the continuous version of the complex argument of a zero-free function, determined uniquely by the condition $|\ph(s)| < \pi$ for $s$ in a neighbourhood of $0$. The above expression for $\ph$ can be derived by finding the boundary limit of the bounded harmonic function $\arg \laplace f$ in the upper complex half-plane, using well-known properties of Bessel functions; we refer to Section~\ref{sec:main} and to the proof of Theorem~1 in~\cite{bib:i90} for additional details.
\item Functions $f_p(x) = x^{-p} e^{-1/x} \ind_{(0, \infty)}(x)$ are bell-shaped for $p > 0$: the function $e^{1/x} x^{-p - 2 n} f_p^{(n)}(x)$ is equal on $(0, \infty)$ to a polynomial of degree $n$. Since $f_p$ is the density function of a hitting time of a diffusion (discussed below), it is indeed included in Theorem~\ref{thm:bell}; we refer to~\cite{bib:js15} for further details.
\item P\'olya frequency functions are bell-shaped; in Theorem~\ref{thm:bell} they correspond to non-decreasing functions $\ph$ that only take integer values.
\item Density functions of all stable distributions concentrated on $(0, \infty)$, considered in~\cite{bib:s15}.
\item Density functions of hitting times of generalised diffusions, studied in~\cite{bib:js15}. These are indeed included in Theorem~\ref{thm:bell}, because they can be represented as convolutions of P\'olya frequency functions and completely monotone functions on $(0, \infty)$, as discussed in~\cite{bib:js15}.
\end{enumerate}

\subsection{An explicit example}

Let $0 < p < q$ and
\formula{
 f(x) & = \frac{1}{\pi} \, \frac{p q (p + q)}{(p^2 + x^2) (q^2 + x^2)} \, .
}
By a simple calculation, one finds that the Fourier transform of $f$ has the representation given in Theorem~\ref{thm:bell}, with $a = b = c = 0$ and an increasing function $\ph$. Indeed, $\ph(s)$ is the continuous version of $\arg(q e^{i p s} - p e^{i q s})$. By Theorem~\ref{thm:bell}, $f$ is strictly bell-shaped. The author is not aware of any elementary proof of this fact.

Interestingly, the function $f(x) = 210 \pi^{-1} (1 + x^2)^{-1} (9 + x^2)^{-1} (16 + x^2)^{-1}$ does not satisfy the assumptions of Theorem~\ref{thm:bell}, and it is not bell-shaped: as can be explicitly checked with the help of \emph{Mathematica}, $f^{(57)}$ changes its sign $61$ times.

%
%

\end{document}